\documentclass[11pt,leqno]{article}
\usepackage[a4paper, textwidth=14cm]{geometry}
\usepackage{amssymb,amsmath}
\usepackage{graphicx}
\usepackage{color}
\usepackage{leftidx,tensor}

\usepackage{accents}

\newtheorem{theorem}{Theorem}[section]
\newtheorem{lemma}[theorem]{Lemma}
\newtheorem{remark}[theorem]{Remark}

\newtheorem{definition}[theorem]{Definition}

\newenvironment{proof}
{\begin{trivlist}
		\item[\hskip\labelsep{\bf Proof:}\quad]}
	{\hspace*{\fill}\rule{2mm}{2mm}
\end{trivlist}}

\newcommand{\bea}{\begin{eqnarray*}}
\newcommand{\eea}{\end{eqnarray*}}
\newcommand{\be}{\begin{equation}}
\newcommand{\ee}{\end{equation}}

\newcommand{\cll}{{\cal L}}

\newcommand{\clu}{{\cal U}}

\newcommand{\RRM}{\mathbb{R}}

\newcommand{\KKM}{\mathbb{K}}

\renewcommand{\Re}{\mathop{\rm Re}}

\newcommand\norm[1]{\left\lVert#1\right\rVert}
\usepackage{amssymb}

\newenvironment{keywords}%
   {\begin{trivlist}\item[]{\bfseries\sffamily Keywords:}\ }
   {\end{trivlist}}

\newenvironment{MSC}%
   {\begin{trivlist}\item[]{\bfseries\sffamily MSC:}\ }
   {\end{trivlist}}

\numberwithin{equation}{section}

\begin{document}

\title{Loss of embeddedness for the one-phase quasistationary Stefan problem in 2D}
\author{Friedrich Lippoth \\ \small{Lecturer at Leibniz University of Applied Sciences Hannover,} \\ \small{Expo Plaza 11, D-30599 Hannover, Germany} \\
\small{\texttt{friedrich.lippoth@leibniz-fh.de}} \\
}
\date{}
\maketitle

\begin{abstract}
We provide an example for a smooth and embedded initial state that looses embeddedness in finite time when evolving according to the quasistationary Stefan problem with Gibbs-Thomson correction and kinetic undercooling in 2D.
\end{abstract}

\begin{keywords}
Stefan problem, loss of embeddedness 
\end{keywords}

\begin{MSC}
80A22, 35R37, 53C42, 53E10 
\end{MSC}

\section{Introduction.}

\vspace{5mm}

The Stefan problem \cite{S1, S2} is an intensively studied representative of a class of moving boundary problems arising from a variety of models in continuum mechanics, other fields of physics as well as in the life sciences. The model describes phase transitions. (As an example one may think of the transition water $\leftrightarrow$ ice.) In our setting, the bulk (say liquid) phase is separated from the solid phase by a sharp interface. Unknowns are the temperature distribution and the shape of the interface. The dissertation \cite{Kn} and the articles \cite{PSZ, SF} (in addition to their respective contributions) outline the underlying physics and contain a broad variety of references as well, including approaches from different mathematical schools. 

\vspace{5mm}

One of the standard techniques for a rigorous mathematical treatment of moving boundary problems like the Stefan problem is the direct mapping method, i.e. transformation to a fixed reference domain / interface by a family of time-dependent diffeomorphisms (called the Hanzawa diffeomorphisms), and to apply methods from functional analysis to the resulting evolution problem. Typically, using this method, one proves local existence, uniqueness and regularity of classical / strong solutions or attractivity properties of the manifold of steady states.

A major difficulty is the understanding of the development of singularities in the moving boundary as it is inherent in the transformation process that the evolving geometry can be investigated only to the point where it touches the boundary of a tubular neighborhood of the (embedded) initial interface. A first (somehow naive) approach to deal with this difficulty can be found in \cite{L}. In this paper we attempt to overcome this limitation.

\vspace{5mm}

We rigorously prove for a quasistationary one-phase version of the model in 2D, that a smooth and simple initial geometry can loose its embeddedness in finite time. The idea is to (continuously) extend the problem to a (physically meaningless) case of immersed curves. To maintain a (mathematically) well defined bulk phase, we cut off overlappings of the boundary and use 'appropriately weak' regularity classes for the temperature in the resulting Lipschitz domains (sections 2 / 3). This puts us in the position to show that a suitable initial interface with touching points (evolving according to the generalized flow) must develop a true overlap in finite time, and, in a second step, extrapolate this behaviour to neighboring embedded initial configurations by means of a (semi-flow type) continuity argument inspired by \cite{MS}. When restricted to embedded geometries, the generalized problem is the classical problem, and the transformation process we apply resembles the standard procedure scetched above. 

\vspace{5mm}

A core element of our construction is the specification of an initial state that directs the system (in its early stages) towards a topological change (section 4). This task is non-trivial, since we are considering the quasistationary model and the initial data of the flow need to satisfy a compatibility condition.

\vspace{5mm}

Physically, our result refers to a situation where a hot liquid is given in a cold solid that has a 'small straight bridge' in the liquid. This bridge is then shown to be partially melted. The result seems natural and may be expected, but, to our best knowledge, has not been proven so far. (Generally, even though the Stefan problem has been studied for more than a century, only very few rigorous qualitative results are available, see \cite{PSZ} for a summary.) Finally, we remark that we are not aware of obvious limitations that could inhibit the transfer of our findings to higher space dimensions, particularly to 3D. 
 
\vspace{15mm}
 
\begin{minipage}{0.35\textwidth}
\includegraphics[width=4cm,height=3cm]{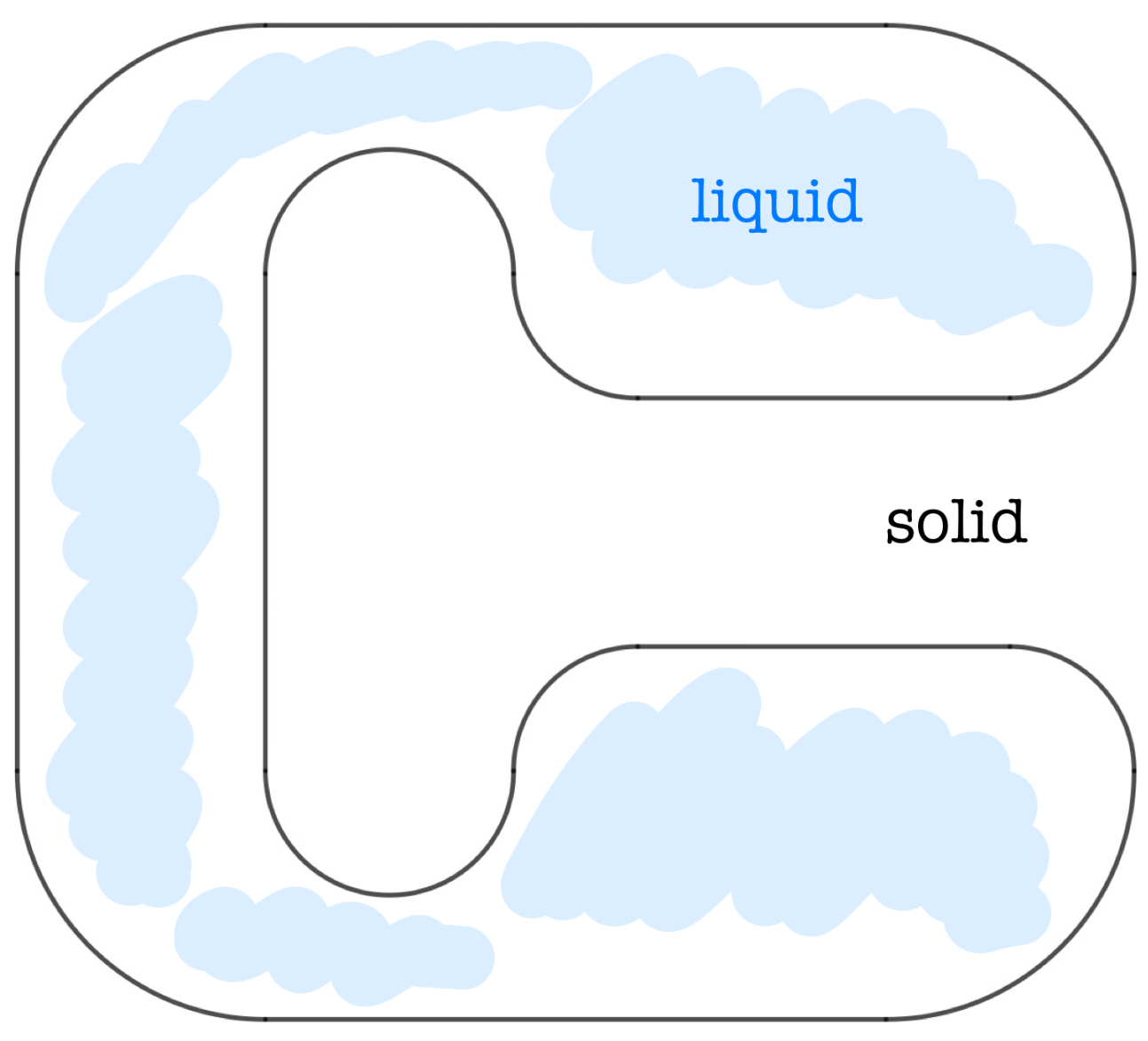} 
\end{minipage}
\begin{minipage}{0.2\textwidth}
\[
\Longrightarrow
\] 
\end{minipage}
\begin{minipage}{0.45\textwidth}
\includegraphics[width=4cm,height=3cm]{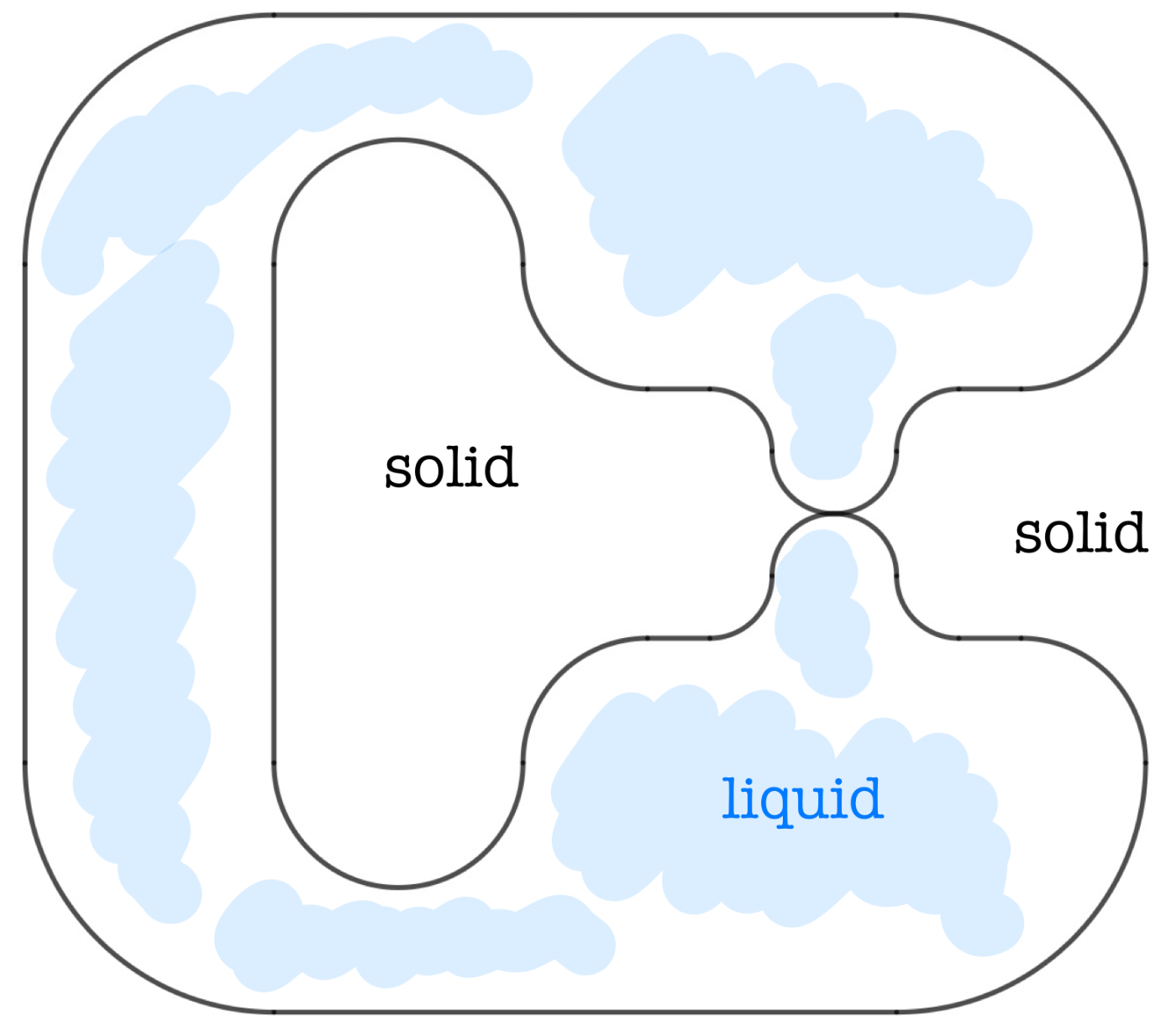}
\end{minipage} 
 
\newpage 

\subsection{ The classical quasistationary one-phase Stefan problem in 2D (including the effects of surface tension and kinetic undercooling)...}

\vspace{5mm}

... reads as follows: We search for a family of embeddings $F(t,\cdot): S = \RRM / 2\pi\mathbb{Z} \rightarrow \RRM^2$ satisfying the evolution law 
\[
(\dot{F} | N_F) = H_F - u_F, 
\]
which is explained below. (Physical constants are inadmissibly normalized to $1$ for simplicity since this has no qualitative effect on our analysis.)

\vspace{5mm}

Let $\gamma_F \subset \RRM^2$ be the simple curve with curvature $H_{\gamma_F}$ (positive for positively oriented spheres) and inner unit normal $N_{\gamma_F}$ parameterized by $F(t,\cdot): S \rightarrow \RRM^2$, and let $\Omega_F \subset \RRM^2$ be the domain enclosd by $\gamma_F$. ( We use the notation 
\[
H_F := H_{\gamma_F} \circ F, \qquad \qquad N_F := N_{\gamma_F} \circ F. \; \mbox{)}
\]
Then $u_{\gamma_F}(t,\cdot)$ is the unique solution of   
\begin{equation}\label{bulk1}
\left\{ \begin{array}{lcll} \Delta u & = & 0 & \mbox{ in } \Omega_F, \\ u - \partial_{N_{\gamma_F}} u & = & H_{\gamma_F} &  \mbox{ on } \gamma_F \end{array} \right.
\end{equation}
and $u_F := u_{\gamma_F}(t,\cdot)|_{\gamma_F} \circ F$. (The unknown $u$ is temperature.) 

\vspace{5mm}

To motivate our ansatz outlined in sections 2 / 3, let us briefly scetch how the Stefan problem can be solved in an embedded setting: 

\vspace{5mm}

(In the following we consider functions $F = F(s)$, $\rho = \rho(s)$ of one variable $s \in S$ (i.e. independent of time) until explicitly stated otherwise.)

\vspace{5mm}

A weak solution $u_{\gamma_F} \in W_2^1(\Omega_F)$ can be obtained by solving  
\begin{equation}\label{ws1}
\int_{\Omega_F} \nabla u \nabla \psi + \int_{\gamma_F} u \psi = \int_{\gamma_F} H_{\gamma_F} \psi \qquad \qquad ( \psi \in W_2^1(\Omega_F,\RRM) )
\end{equation}
by a standard indirect argument involving Lax-Milgram and compactness. Let us assume that the curve $\gamma_F$ is a graph in normal direction over some smooth simple initial curve $\gamma_0 = F_0[S]$ (enclosing the domain $\Omega_0$), i.e $\gamma_F$ is parameterized by  
\begin{equation}\label{rho}
F(s) = F_0(s) - \rho(s) N_0(s), \qquad \qquad (s \in S, \quad \rho: S \rightarrow \mathbb{R}).
\end{equation}
($N_0(s) = (N_{\gamma_0} \circ F_0)(s)$ is the inner unit normal vector of $\gamma_0$ at $p = F_0(s)$.) Observe that for given $F$, $\rho = (F_0 - F | N_0)$. Hence, there is a smooth one to one correspondence between the embedding $F$ and the distance function $\rho$. We refer to this correspondence by writing $F = F(\rho)$, $\rho = \rho(F)$.

We denote by $\mathcal{N}_0$ a tubular neighborhood of $\gamma_0$ and by $\Phi_0^{-1} = (P_0, \Lambda_0)$ the inverse of the smooth diffeomorphism  
\[
\Phi_0(p,\delta) = p - \delta N_{\gamma_0}(p): \quad \gamma_0 \times (-a_0,a_0) \rightarrow \mathcal{N}_0. 
\]
($P_0$ denotes metric projection onto, and $\Lambda_0$ is signed distance w.r.t $\gamma_0$.) The diffeomorphism 
\[
\theta_F(p) = p - \rho(p) N_{\gamma_0}(p): \quad \gamma_0 \rightarrow \gamma_F \qquad \qquad (p = F_0(s))
\]
can be extended to a diffeomorphism $\Omega_0 \rightarrow \Omega_F$, $\bar\Omega_0 \rightarrow \bar\Omega_F$, $\gamma_0 \rightarrow \gamma_F$, $\mathbb{R}^2 \rightarrow \mathbb{R}^2$ via 
\[
\Theta_F(p) := 
\left\{ \begin{array}{ll} P_0(p) - \left[ \Lambda_0(p) + (\varphi_0 \circ \Lambda_0)(p) \rho(P_0(p)) \right] N_{\gamma_0}(P_0(p)), & p \in \mathcal{N}_0, \\ p, & p \in \mathbb{R}^2 \setminus \mathcal{N}_0, \end{array} \right.
\]
where $\varphi_0 \in C^\infty(\mathbb{R},[0,1])$ satisfies for example  
\[
\varphi_0(r) = 
\left\{ \begin{array}{ll} 1, & |r| \leq a_0/4, \\ 0, & |r| \geq 3a_0/4 \end{array} \right.
\] 
and $|\varphi_0'| \leq 3/a_0$. 

\vspace{5mm}

(We have now that $F = \Theta_F|_{\gamma_0} \circ F_0$. We also have denoted the function
\[
p \mapsto \rho(s): \gamma_0 \rightarrow \RRM \qquad \qquad (p = F_0(s))
\]
by the same symbol $\rho$. Its differential $T_p \gamma_0 \rightarrow \RRM$ will be denoted by $d_p \rho$. It is important to consider only functions s.t. $|\rho| \leq a_0/4$, cf. section 2.) 

\vspace{5mm}

Let   
\begin{eqnarray*}
a_F(v,\varphi) & := & \int_{\Omega_0} |\mbox{det} D \Theta_F| ( \nabla v | \nabla \varphi )_F + \int_{S}|D \Theta_F(F_0) F_0'| (v \varphi)(F_0) \\
b_F(\varphi) & := & \int_{S} |D \Theta_F(F_0) F_0'| H_F \varphi(F_0), \\ 
\end{eqnarray*}
where 
\[
\left( \nabla v | \nabla \varphi \right)_F = \left( \; \left( D\Theta_F^{-1} \right)^T \left( \Theta_F \right) \nabla v \; | \;  \left( D\Theta_F^{-1} \right)^T \left( \Theta_F \right) \nabla \varphi \; \right), 
\]
and note that the forms $a_F: W_2^1(\Omega_0,\RRM) \times W_2^1(\Omega_0,\RRM) \rightarrow \RRM$ are uniformly coercive and continuous w.r.t. $F$ sufficiently $W_2^2(S,\RRM^2)$-close to $F_0$. We name $v_{\gamma_F}$ the unique solution of the operator equation $a_F(v,\varphi) = b_F(\varphi)$, $\varphi \in W_2^1(\Omega_0,\RRM)$. The function 
\[
u_{\gamma_F} := v_{\gamma_F} \circ \Theta_F^{-1} \in W_2^1(\Omega_F,\RRM)
\]
is the unique weak solution (\ref{ws1}). 

\newpage

(Observe that the mapping
\[
\varphi \mapsto \psi := \varphi \circ \Theta_F^{-1}: W_2^1(\Omega_0,\RRM) \rightarrow W_2^1(\Omega_F,\RRM)
\]
is an isomorphism.) Using that the mapping $F \mapsto (a_F, b_F)$ is smooth w.r.t. various topologies, it is straightforward to check that 
\[
F \mapsto u_{\gamma_F} \,|_{\gamma_F} \circ F = v_{\gamma_F}|_{\gamma_0} \circ F_0 \in C^{1-}(\mathbb{B}_{W_2^2(S,\RRM^2)}(0,\varepsilon), W_2^{1/2}(S,\RRM)), 
\]
cf. Lemma \ref{contws}, appendix. The Stefan problem reduces to the quasilinear parabolic evolution equation (for time dependent $\rho(t,\cdot)$, $F(t,\cdot)$) 
\[
\dot{\rho} = \frac{1}{(N_0 | N_{F(\rho)})} \left( - H_{F(\rho)} + u_{F(\rho)} \right), 
\]
which can be solved using standard techniques from parabolic theory. 

\vspace{5mm}

For later use we define 
\[
\mathbf{B}_\varepsilon := \mathbb{B}_{W_2^{2}(S,\RRM^2)}(F_0, \varepsilon), \qquad \mathbf{B}^*_\varepsilon := \mathbb{B}_{W_2^{2}(S,\RRM)}(0, \varepsilon)
\]
and keep in mind that (for suitable $\varepsilon_1, ..., \varepsilon_4$) 
\[
\rho \mapsto F \in C^\infty(\mathbf{B}^*_{\varepsilon_1}, \mathbf{B}_{\varepsilon_2}), \qquad \qquad F \mapsto \rho \in C^\infty(\mathbf{B}_{\varepsilon_3}, \mathbf{B}^*_{\varepsilon_4}).
\]

\vspace{5mm}

\section{An artificial continuous extension of the Stefan problem to the case of truly immersed curves.}

\vspace{5mm}

\subsection{Concept of a critical initial state.}

\vspace{5mm}

Let $S^+$ and $S^-$ be the closed upper and the lower half of the unit circle $S = \mathbb{R}/2\pi\mathbb{Z}$, respectively. Let $S^+ \subset X^+ \subset S$, $S^- \subset X^- \subset S$ be in such a way that $X^+ \cap X^-$ consists of two open disjoint components that are neigborhoods (in $S$) of $(1,0)$ and of $(-1,0)$. Let $F_0 = (x_0, y_0) \in C^\infty(S)$, $\gamma_0 := F_0[S] \subset \mathbb{R}^2$, $|F_0'| > 0$ on $S$. Assume: 
\begin{itemize}
\item $p = (p_1,p_2) \in \gamma_0$ iff $\check{p} = (p_1,-p_2) \in \gamma_0$.
\item $F_0[S^+] \subset \{ (x,y); \; y \geq 0 \}$, $F_0[S^-] \subset \{ (x,y); \; y \leq 0 \}$, $\gamma_0^+ := F[X^+]$, $\gamma_0^- := F[X^-]$ are embedded. 
\item There is precisely one line segment $K_0 \subset \{ (x,0); \; x > 0 \}$ s.t. $K_0 = ( \gamma_0^+ \cap \gamma_0^- ) \cap {\{ (x,y); \; x > 0 \}}$. We set $F_0[\mathcal{A}_0^+] := K_0 =: F_0[\mathcal{A}_0^-]$, where $\mathcal{A}_0^\pm \subset S^\pm$. $( \gamma_0^+ \cap \gamma_0^- ) \cap {\{ (x,y); \; x \leq 0 \}}$ consists of vertical lines.
\item There are connected open sets $S^\pm \supset \mathcal{O}_0^\pm \supset \mathcal{A}_0^\pm$ such that $x_0' < 0$ in $\mathcal{O}_0^-$ and $x_0' > 0$ in $\mathcal{O}_0^+$. The sets $\mathcal{O}_0^\pm \setminus \mathcal{A}_0^\pm$ are supposed to be small enough to obtain $\Vert y_0 \Vert_{C^1(\mathcal{O}_0^\pm,\RRM)} << 1$ and $N_{\gamma_0}|_{\mathcal{O}_0^\pm} \sim (0,\pm1)^T$, but are true supersets of $\mathcal{A}_0^\pm$, i.e. $y_0^\pm \left\{ \begin{smallmatrix} > \\ < \end{smallmatrix} \right\} 0$ on $\partial \mathcal{O}_0^\pm$.
\end{itemize}
\begin{minipage}{0.45\textwidth}
\begin{flushleft}
\includegraphics[width=5.0cm,height=6.25cm]{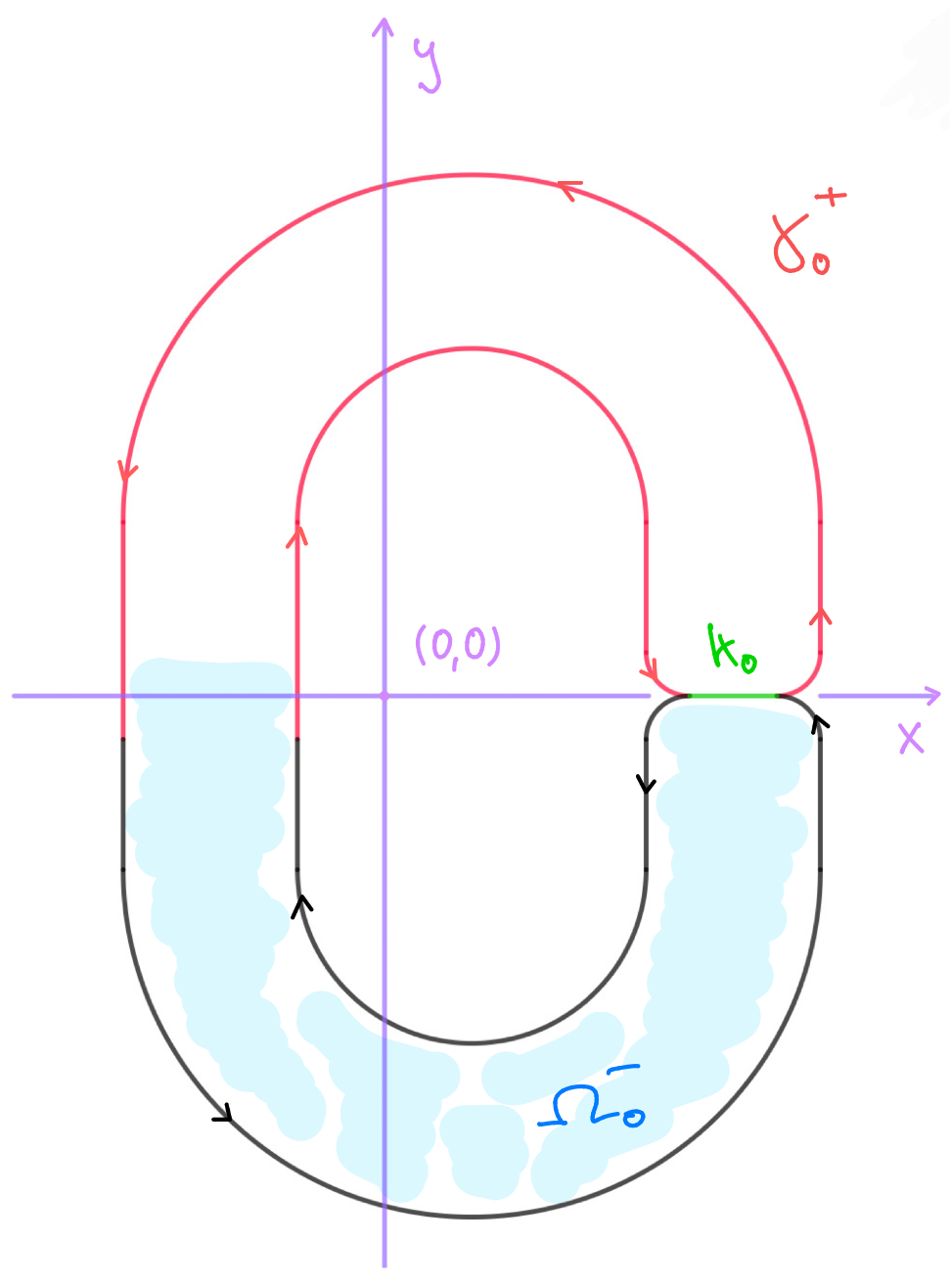}
\end{flushleft}
\end{minipage}
\begin{minipage}{0.05\textwidth}
$\;$
\end{minipage}
\begin{minipage}{0.5\textwidth}
\begin{flushleft}
\scriptsize{Concept of a possible initial shape. The realization of the condition \[ \min \left\{ \lim_{ \begin{smallmatrix} (x,y) \in\Omega_0^+ \\ (x,y)\downarrow E_0 \end{smallmatrix} } u_0(x,y), \; \lim_{ \begin{smallmatrix} (x,y) \in\Omega_0^- \\ (x,y)\uparrow E_0 \end{smallmatrix} } u_0(x,y) \right\} > 0\] for a subset $E_0 \subset K_0$ will be crucial, cf. section 4. ($u_0: \Omega_0 \rightarrow \RRM$ is temperature.)}
\end{flushleft}
\end{minipage}

\vspace{10mm}

For a curve $\gamma_0$ like this it makes sense to define the domain (an open connected subset of $\mathbb{R}^2$) $\Omega_0$ enclosed by $\gamma_0$. Also, the integral
\[
\int_{\Omega_0} \eta, \qquad \eta \in L_1(\Omega_0, \RRM), 
\]
is well defined. We split $\Omega_0 = \Omega_0^+ \cup \Omega_0^-$ in the following way: 
\begin{eqnarray*}
\Omega_0^\pm & := & \{ (x,y) \in \Omega_0; \; y \left\{ \begin{smallmatrix} > \\ < \end{smallmatrix} \right\} 0 \} \\
& \cup & \{ (x,y) \in \Omega_0; \; x < 0; \; \{ (z,y); \; z \in \RRM\} \cap \gamma_0^\pm \neq \emptyset \}. 
\end{eqnarray*}

\vspace{5mm}

\subsection{A trace mapping for curves with touching points.}

\vspace{5mm}

For a function $u_0 \in W_2^1(\Omega_0, \RRM)$ we can define the trace map as follows: Let $\{ \clu_1^\pm,...,\clu_m^\pm\}$ be open sets in the plane that cover $\gamma_0^\pm$ s.t. $\gamma_0^{j,\pm} := \gamma_0^\pm \cap \clu_j^\pm$ is embedded and connected, and s.t. the traces 
\[
\mbox{tr}_j^\pm \in \cll \left( W_2^1 \left( \clu_j^\pm \cap \Omega_0, \RRM \right), W_2^{1/2}\left( \gamma_0^{j,\pm}, \RRM  \right) \right)
\]
are well defined. (The latter can be checked by local flattening and using the half-space results $\mbox{tr} \in \cll ( W_2^1(\mathbb{H}^\pm,\RRM), W_2^{1/2}(\RRM,\RRM)  )$, where $\mathbb{H}^\pm := \{ (p_1,p_2) \in \RRM^2; \; p_2 \left\{ \begin{smallmatrix} > \\ < \end{smallmatrix} \right\} 0\}$.) Choosing suitable subordinate partitions of unity $\zeta_j^\pm$ this yields mappings
\[
\mbox{tr}^\pm: u \mapsto \left( \sum_{j=1}^m \zeta_j^\pm (\mbox{tr}_j^\pm u) \right) \circ F_0|_{X^\pm} \in \cll \left( W_2^1 \left( \Omega_0, \RRM \right), W_2^{1/2} \left( X^\pm , \RRM \right) \right).
\]
Since (by, again, a suitable choice of $\zeta_j^\pm$) $\mbox{tr}^+ u$ and $\mbox{tr}^- u$ coincide in $X^+ \cap X^-$, $\mbox{tr}^+$ and $\mbox{tr}^-$ can be put together to a mapping $\mbox{tr} \in \cll \left( W_2^1 \left( \Omega_0, \RRM \right), W_2^{1/2} \left( S , \RRM \right) \right)$ which we consider the trace-map w.r.t. $\gamma_0$. Note that the integral
\[
\int_{\gamma_0} u := \int_{\gamma_0} \mbox{tr} \, u := \int_S \mbox{tr} \, u(s) |F_0'| \; ds, \qquad \qquad u \in W_2^1(\Omega_0, \RRM), 
\]
is well defined. 
\begin{remark}
Note that the above definitions make also sense for a Lipschitz domain whose boundary curve contains touching points and is a.e. differentiable with essentially bounded first differential.
\end{remark}

\vspace{5mm}

{\subsection{Cutting of overlaps.}

\vspace{5mm} 

Let $F_0$ be as described above. As before, we consider $N_0$ both a function on $S$ and on $\gamma_0^\pm$, and indicate this by writing $N^\pm_{\gamma_0}(p)$ ($p = F_0(s)$) as well as $N_0^\pm(s)$, $s \in X^\pm$. Let  
$F := \left( \begin{smallmatrix} x \\ y \end{smallmatrix} \right) \in \mathbf{B}_\varepsilon$ be such that $\rho(F) \in \mathbf{B}_{\tilde \varepsilon}^*$ and, moreover, $\gamma^\pm_F := F[X^\pm]$ is a graph in normal direction over $\gamma^\pm_0$: 
\[
\rho^\pm(s) := \rho(s), \qquad \qquad \qquad\qquad \qquad \qquad \qquad \qquad \; \; \; s \in X^\pm, 
\]
\[
F^\pm(s) :=  \left( \begin{smallmatrix} x^\pm \\ y^\pm \end{smallmatrix} \right)(s) := F_0(s) - \rho^\pm(s) N^\pm_0(s), \qquad \qquad s \in X^\pm.   
\]
If $F \in \mathbf{B}_\varepsilon$, we have $(x^\pm)' \left\{ \begin{smallmatrix} > \\ < \end{smallmatrix} \right\} 0$, $x^\pm > 0$ on $\mathcal{O}_0^\pm$ and $y^\pm \left\{ \begin{smallmatrix} > \\ < \end{smallmatrix} \right\} 0$ on $\partial \mathcal{O}_0^\pm$. (Note that $\mathbf{B}_\varepsilon \hookrightarrow C^1(S,\RRM^2)$.) We define  
\[
\left\{ \begin{array}{lll} \mathcal{O}^+ = \mathcal{O}^+(F) & := & \{ s \in \mathcal{O}_0^+; \; y(s) < 0 \} \subset \mathcal{O}_0^+ \subset X^+, \\ \mathcal{O}^- = \mathcal{O}^-(F) & := & \{ s \in \mathcal{O}_0^-; \; y(s) > 0 \} \subset \mathcal{O}_0^- \subset X^-, \end{array} \right.
\]
\[
\tilde F^\pm (s) := \left\{ \begin{array}{ll} F(s), & s \in X^\pm \setminus \mathcal{O}^\pm \\ (x^\pm(s),0)^T & s \in \mathcal{O}^\pm, \end{array} \right.
\]
and $\tilde \gamma_F^\pm := \tilde F^\pm [X^\pm]$. For later use we also set 
\[
\tilde H_F (s) := \left\{ \begin{array}{ll} H_F(s), & s \in S \setminus (\mathcal{O}^+ \cup \mathcal{O}^-), \\ 0, & s \in \mathcal{O}^+ \cup \mathcal{O}^-. \end{array} \right.
\]
Let $N_0(s) := (n_0^1,n_0^2)^T(s)$. By definition, $\tilde y^\pm = 0$ in $\overline{\mathcal{O}^\pm}$ which gives rise to the ansatz  
\[
y_0(s) - \tilde \rho^\pm(s) n_0^2(s) = 0, \qquad \qquad \qquad s \in \overline{\mathcal{O}^\pm}, 
\]
and the definition 
\[
\tilde{\rho}^\pm(s) := \left\{ \begin{array}{ll} \rho^\pm(s), & s \in X^\pm \setminus \mathcal{O}^\pm, \\ \frac{y_0(s)}{n_0^2(s)}, & s \in \mathcal{O}^\pm. \end{array} \right. 
\]
(We again denote the functions
\[
p \mapsto \tilde \rho^\pm(s): \gamma_0 \rightarrow \RRM \qquad \qquad (p = F_0(s))
\]
by the same symbols $\tilde \rho^\pm$.) The curve $\tilde \gamma_F^\pm$ is parameterized by 
\[
\tilde F^\pm(s) = F_0(s) - \tilde{\rho}^\pm(s) N^\pm_0(s), \qquad \qquad s \in X^\pm,
\]
and $\tilde \gamma_F^\pm$ is a Lipschitz-graph over $\gamma_0^\pm$:  

\vspace{15mm}

\begin{minipage}{0.7\textwidth}
\begin{flushleft}
\includegraphics[width=10cm,height=5cm]{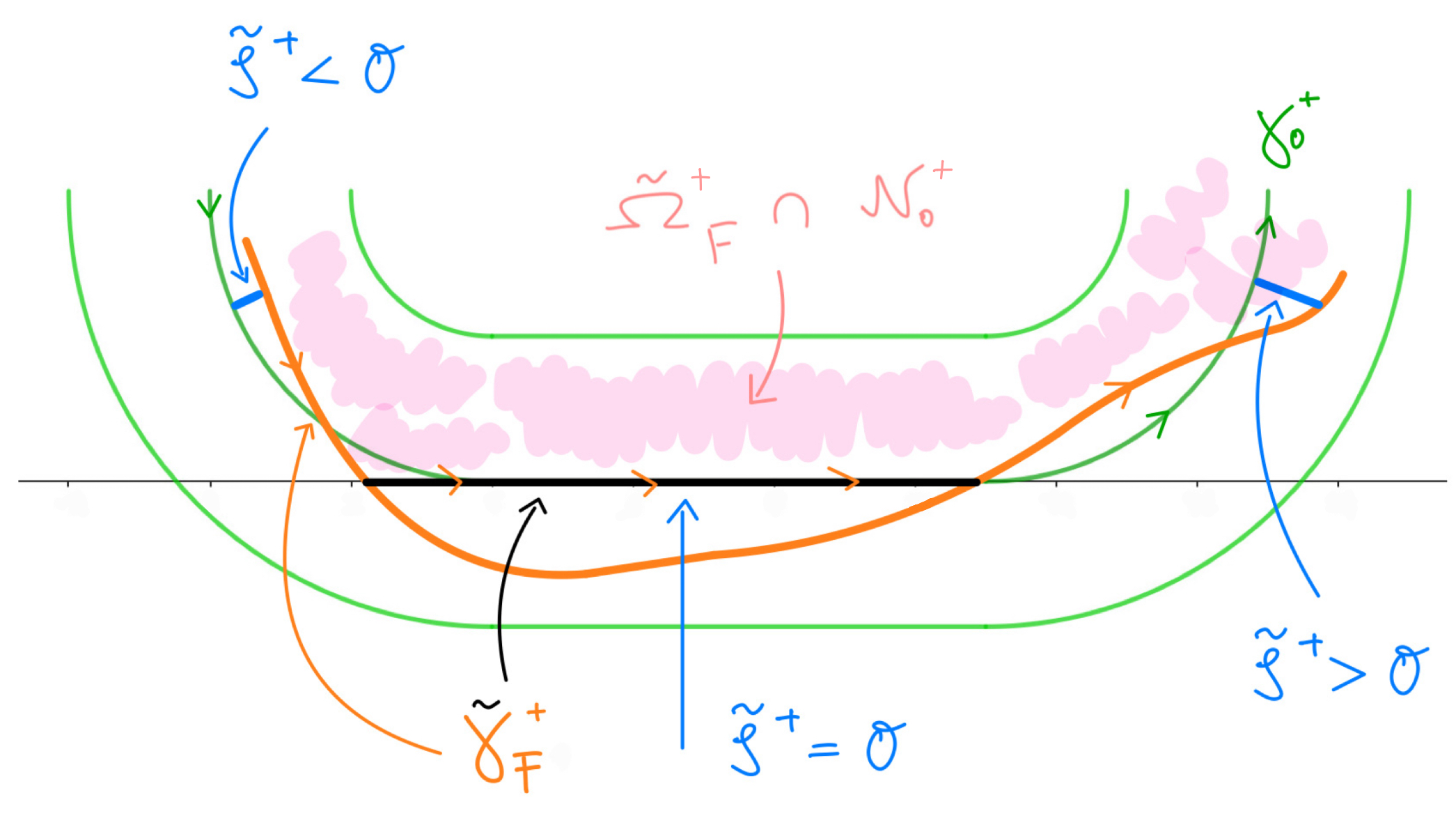}
\end{flushleft}
\end{minipage}
\begin{minipage}{0.1\textwidth}
$\;$
\end{minipage}
\begin{minipage}{0.2\textwidth}
\begin{flushleft}
\scriptsize{As defined later in the manuscript, $\mathcal{N}_0^+$ is a tubular neighborhood of $\gamma_0^+$.}
\end{flushleft}
\end{minipage}

\vspace{15mm}

The curve $\tilde \gamma_{F} := \tilde \gamma^+_{F} \cup \tilde \gamma^-_{F}$ will enclose an open connected domain $\tilde \Omega_F$ that we write as $\tilde \Omega_F = \tilde \Omega_F^+ \cup \tilde \Omega_F^-$, where
\begin{eqnarray*}
\tilde \Omega_F^\pm & := & \{ (x,y) \in \tilde \Omega_F; \; y \left\{ \begin{smallmatrix} > \\ < \end{smallmatrix} \right\} 0 \} \\
& \cup & \{ (x,y) \in \tilde \Omega_F; \; x < 0; \; \{ (z,y); \; z \in \RRM\} \cap \tilde \gamma_F^\pm \neq \emptyset \}. 
\end{eqnarray*}

\vspace{5mm}

\begin{lemma}\label{tsmall}
The functions $\tilde \rho^\pm$ are uniformly Lipschitz continuous w.r.t. $\rho \in \mathbf{B}_\varepsilon^\ast$ and $\rho \mapsto \tilde \rho^\pm \in B(\mathbf{B}_\varepsilon^\ast,W_\infty^1(X^\pm, \RRM))$. In fact, there is $\varepsilon_0 > 0$ s.t. $\Vert \tilde \rho^\pm \Vert_{W_\infty^1(X^\pm, \RRM)} \leq \mu < < 1$ for $\rho \in \mathbf{B}_\varepsilon^\ast$ and $\varepsilon \in (0, \varepsilon_0)$. 
\end{lemma}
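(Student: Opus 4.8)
The plan is to exhibit $\tilde\rho^\pm$ as the small, smooth function $\rho^\pm$ corrected by a term supported on the melt--off set $\mathcal{O}^\pm$ that vanishes on $\partial\mathcal{O}^\pm$, so that the gluing produces no jump and the glued function is Lipschitz with constant controlled by the $C^1$--norms of the two branches. First I would record the regularity of the ingredients. Since $W_2^2(S,\RRM)\hookrightarrow C^1(S,\RRM)$, every $\rho\in\mathbf{B}_\varepsilon^\ast$ is $C^1$ with $\Vert\rho\Vert_{C^1(S)}\leq C\varepsilon$ (embedding constant), and shrinking $\varepsilon_0$ if necessary the corresponding $F=F(\rho)$ is so $C^1$--close to $F_0$ that the graph and sign conditions stated just before the Lemma hold; in particular $y^\pm$ keeps, on the compact set $\partial\mathcal{O}_0^\pm$, the sign opposite to the one selecting $\mathcal{O}^\pm$, so a neighbourhood of $\partial\mathcal{O}_0^\pm$ is disjoint from $\mathcal{O}^\pm$ and hence $\overline{\mathcal{O}^\pm}\subset\mathcal{O}_0^\pm$. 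Because $N_{\gamma_0}|_{\mathcal{O}_0^\pm}\sim(0,\pm1)^T$, the component $n_0^2$ is bounded away from $0$ on $\overline{\mathcal{O}_0^\pm}$, and as $F_0$ is smooth $y_0/n_0^2\in C^\infty(\overline{\mathcal{O}_0^\pm})$ with $\Vert y_0/n_0^2\Vert_{C^1(\overline{\mathcal{O}_0^\pm})}\leq C_{F_0}\Vert y_0\Vert_{C^1(\mathcal{O}_0^\pm)}$, a fixed small number built into the construction of $F_0$. Setting $g^\pm:=\rho^\pm-y_0/n_0^2\in C^1(\overline{\mathcal{O}_0^\pm})$, the definition of $\tilde\rho^\pm$ reads $\tilde\rho^\pm=\rho^\pm-g^\pm\,\mathbf{1}_{\mathcal{O}^\pm}$ on $X^\pm$.

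Next I would analyse the free boundary. Since $\mathcal{O}^\pm$ is open and $\overline{\mathcal{O}^\pm}\subset\mathcal{O}_0^\pm$, one has $\partial\mathcal{O}^\pm=\{s\in\mathcal{O}_0^\pm;\ y^\pm(s)=0\}$; on this set $y_0-\rho^\pm n_0^2=0$, i.e.\ (dividing by $n_0^2\neq0$) $g^\pm=0$. Hence $g^\pm\,\mathbf{1}_{\mathcal{O}^\pm}$ is continuous on $X^\pm$, and so is $\tilde\rho^\pm$: the branch $\rho^\pm$ used on $X^\pm\setminus\mathcal{O}^\pm$ and the branch $y_0/n_0^2$ used on $\mathcal{O}^\pm$ match along $\partial\mathcal{O}^\pm$.

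For the quantitative part, note that $\mathcal{O}^\pm$, being open in the arc $X^\pm$ with compact closure inside $\mathcal{O}_0^\pm$, is a countable disjoint union of open subarcs $(\alpha_i,\beta_i)$ with $\alpha_i,\beta_i\in\partial\mathcal{O}^\pm$, so $g^\pm(\alpha_i)=g^\pm(\beta_i)=0$. For $s<t$ in $X^\pm$ a telescoping estimate over the closed subarcs met by $[s,t]$ — on each of which $g^\pm\,\mathbf{1}_{\mathcal{O}^\pm}$ either vanishes identically or equals $g^\pm\in C^1$ vanishing at the relevant endpoint(s) — gives $|g^\pm\mathbf{1}_{\mathcal{O}^\pm}(s)-g^\pm\mathbf{1}_{\mathcal{O}^\pm}(t)|\leq\Vert g^\pm\Vert_{C^1(\overline{\mathcal{O}_0^\pm})}\,|s-t|$. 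Thus $\tilde\rho^\pm\in W_\infty^1(X^\pm,\RRM)$ with $\Vert\tilde\rho^\pm\Vert_{W_\infty^1(X^\pm)}\leq\Vert\rho^\pm\Vert_{C^1(X^\pm)}+\Vert g^\pm\Vert_{C^1(\overline{\mathcal{O}_0^\pm})}\leq C\varepsilon+C_{F_0}\Vert y_0\Vert_{C^1(\mathcal{O}_0^\pm)}$, with $C,C_{F_0}$ depending only on $F_0$. By the construction of $F_0$ the last term is $\ll 1$, and shrinking $\varepsilon_0$ makes the first term as small as we please; the right--hand side is then bounded by some $\mu\ll1$ uniformly in $\rho\in\mathbf{B}_\varepsilon^\ast$ and $\varepsilon\in(0,\varepsilon_0)$. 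This simultaneously yields the uniform (in $\rho$) Lipschitz continuity of the $\tilde\rho^\pm$ and the fact that $\rho\mapsto\tilde\rho^\pm\in B(\mathbf{B}_\varepsilon^\ast,W_\infty^1(X^\pm,\RRM))$.

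The \emph{only genuine point} is the gluing across $\partial\mathcal{O}^\pm$: a priori this set is merely the zero set of the $C^1$ function $y^\pm$ and $\mathcal{O}^\pm$ may have infinitely many components, so one cannot simply invoke ``piecewise $C^1$ with matching values is $C^1$''. The resolution is that the two branches of $\tilde\rho^\pm$ coincide exactly on $\partial\mathcal{O}^\pm$, so the gluing is only a Lipschitz (not a $C^1$) statement, with Lipschitz constant the maximum of the two branch constants; everything else is the Sobolev embedding together with the smallness of $\rho$ and of $y_0|_{\mathcal{O}_0^\pm}$ that is built into the choice of $F_0$ and $\varepsilon_0$.
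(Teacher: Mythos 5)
Your proof is correct and takes essentially the same route as the paper: the paper observes directly that $\tilde\rho^\pm - b_0 = \min\{\rho^\pm - b_0,\,0\}$ on $\mathcal{O}_0^\pm$ with $b_0 = y_0/n_0^2$, and reads off the Lipschitz bound from the $1$-Lipschitzness of truncation, which is exactly the content of your identity $\tilde\rho^\pm = \rho^\pm - g^\pm\mathbf{1}_{\mathcal{O}^\pm}$ with $g^\pm\mathbf{1}_{\mathcal{O}^\pm} = \max\{g^\pm,0\}$ on $\mathcal{O}_0^\pm$. Your telescoping/gluing verification of the Lipschitz estimate is a slightly more hands-on rendering of the same truncation observation, and your bookkeeping on $\overline{\mathcal{O}^\pm}\subset\mathcal{O}_0^\pm$ and the smallness coming from $W_2^2\hookrightarrow C^1$ matches the paper's.
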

\begin{proof}
We give the proof for $\tilde \rho^+$. Note that $\tilde \rho^+ = \rho^+$ on $X^+ \setminus \mathcal{O}_0^+$. Let $b_0 := y_0/n_0^2$. 

\vspace{5mm}

Choosing $\mathcal{O}_0^+ \setminus \mathcal{A}_0^+$ small enough, we have $n_0^2 |_{\mathcal{O}_0^+} \sim 1 > 0$ and $|b_0 |_{\mathcal{O}_0^+}|, |b_0' |_{\mathcal{O}_0^+}| << 1$. Observe that 
\[
(\tilde \rho^+ - b_0)|_{\mathcal{O}_0^+} := \min \{ (\rho^+ - b_0)|_{\mathcal{O}_0^+}, 0 \},  
\]
and therefore 
\[
|(\tilde \rho^+ - b_0)(s) - (\tilde \rho^+ - b_0)(t)| \leq |(\rho^+ - b_0)(s) - (\rho^+ - b_0)(t)|, \qquad \qquad s,t \in \mathcal{O}_0^+.
\]
Further, $\tilde \rho^+ = \rho^+$ is $C^1$ in $X^+ \setminus \mathcal{O}_0^+$, $\rho^+ = \tilde \rho^+$ on $\partial \mathcal{O}_0^+$ and 
\begin{eqnarray*}
|\tilde \rho^+(s) - \tilde \rho^+(t)| \leq |\rho^+(s) - \rho^+(r)| + |\tilde \rho^+(r) - \tilde \rho^+(t)|  
\end{eqnarray*}
$s \in X^+ \setminus \mathcal{O}_0^+$, $t \in \mathcal{O}_0^+$,  and $|r - s| = \mbox{dist}(s,\mathcal{O}_0^+)$. 

\vspace{5mm}

As a Lipschitz continuous function, $\tilde \rho^+$ is a.e. differentiable (Rademacher, \cite{Zie}). Thanks to $\mathbf{B}_\varepsilon^\ast \hookrightarrow C^1(S,\RRM)$, on $X^+ \setminus \mathcal{O}^+$ we have $|\tilde \rho^+|, \; |(\tilde \rho^+)'|  \sim C \varepsilon$ in this parameter range. The assertion follows from $\tilde \rho^+ = b_0$ in $\mathcal{O}^+$.
\end{proof}
\begin{definition}\label{defE}
$\mathbf{E}_\varepsilon^\ast$ is the closed subset of $\mathbf{B}_\varepsilon^\ast$ with elements s.t. $\rho^\pm = \tilde \rho^\pm$. $\hat{\mathbf{E}}_{\varepsilon}^\ast$ consists of those elements of $\rho \in \mathbf{E}_\varepsilon^\ast$ s.t. $F(\rho)[\mathcal{O}^\pm] \cap \{ (x,y) \in \RRM^2; \, y = 0 \} = \emptyset$. $\mathbf{E}_\varepsilon$ and $\hat{\mathbf{E}}_{\varepsilon}$ contain those  $F \in \mathbf{B}_{\varepsilon}$ s.t. $\tilde \rho(F) \in \mathbf{E}_{\varepsilon}^\ast$, $\tilde \rho(F) \in \hat{\mathbf{E}}_{\varepsilon}^\ast$, resp.
\end{definition}
Naturally, we obtain Lipschitz continuous homeomorphisms (uniform w.r.t $F \in \mathbf{B}_\varepsilon$) $\tilde \theta^\pm_{F}: \gamma_0^\pm \rightarrow \tilde \gamma^\pm_{F}$ by  
\[
\tilde \theta^\pm_{F}(p) := p - \tilde{\rho}^\pm(p) N^\pm_{\gamma_0}(p), \qquad \qquad p \in \gamma_0^\pm. 
\]
Recall that $\tilde F [S] = \tilde \gamma_F = \tilde \gamma_F^+ \cup \tilde \gamma_F^-$ and observe that the tangent spaces $T_p \tilde \gamma_F$ (defined by means of the parameterization $\tilde F$) exist for a.e. $p \in \tilde \gamma_F$.

\vspace{5mm}

Since $\tilde \rho^\pm = \rho^\pm$ on $F_0[X^+ \cap X^-]$, $\tilde \theta^\pm_{F}$ can be put together to a Lipschitz continuous homeomorphisms $\tilde \theta_{F}: \gamma_0 \rightarrow \tilde \gamma_F$ via 
\[
\tilde \theta_{F}(p) := \tilde \theta_{F}^\pm(p), \qquad \qquad p \in \gamma_0^\pm, 
\]
that satisfies
\begin{itemize}
\item[(i)] $\tilde \theta_{F_0} \circ F_0 = F_0$; 
\item[(ii)] $\tilde \theta_F \circ F_0 = \tilde F$; $(\tilde \theta_F \circ F_0)[S] = \tilde \gamma_F$; 
\item[(iii)] $\rho \mapsto \tilde \theta_{F(\rho)} \circ F_0 \in B(\mathbf{B}_\varepsilon^\ast, \mbox{Lip}(S, \RRM^2)) \cap C(\mathbf{E}_\varepsilon^\ast, \mbox{Lip}(S, \RRM^2))$; 
\item[(iv)] $\tilde \theta_F \circ F_0$ is differentiable a.e. on $S$ and $\rho \mapsto D (\tilde \theta_{F(\rho)} \circ F_0) \in B(\mathbf{B}_\varepsilon^\ast, L_\infty(S, \RRM^2)) \cap C(\mathbf{E}_\varepsilon^\ast, L_\infty(S, \RRM^2))$;
\item[(v)] For $\rho \in \mathbf{B}_\varepsilon^\ast$,  
\begin{eqnarray}\label{stheta}
\norm{ (\tilde \theta_{F(\rho)} - \mathbf{Id}) \circ F_0 }_{C(S, \RRM^2)} + \norm{ D (\tilde \theta_{F(\rho)} \circ F_0) - F_0' }_{L_\infty(S, \RRM^2)} \nonumber \\ \nonumber \\ << 1; 
\end{eqnarray}
\item[(vi)] If $F \in \hat{\mathbf{E}}_\varepsilon$ and $u \in L_1(\tilde \gamma_F,\RRM)$ we have 
\[
\int_{\tilde \gamma_F} u = \int_S |D (\tilde \theta_F \circ F_0)| u(\tilde \theta_F \circ F_0).
\]
\end{itemize}
\begin{remark}
If $X, Y$ are Banach spaces, we denote by $B(X,Y)$ the set of bounded and Borel - measurabel mappings $X \rightarrow Y$. Note that the above mappings (iii), (iv) are not continuous w.r.t. the adressed topologies on the whole ball $\mathbf{B}_\varepsilon^\ast$. The measurability basically is a consequence of the fact that $\partial (\tilde \rho^\pm - b_0)|_{\mathcal{O}_0} = (\rho - b_0)'|_{\mathcal{O}_0} \cdot \mathbf{1}_{ \left\{ (\rho - b_0)|_{\mathcal{O}_0} \left\{ \begin{smallmatrix} < \\ > \end{smallmatrix} \right\} 0 \right\}}$ a.e. . We also have 
\begin{eqnarray}\label{regH}
\rho \mapsto \tilde H_{F(\rho)} \in B(\mathbf{B}_\varepsilon^\ast, L_2(S)) \cap C(\mathbf{E}_\varepsilon^\ast, L_2(S)).
\end{eqnarray}
\end{remark}

\vspace{10mm}

We denote by $\mathcal{N}_0^\pm$ tubular neighborhoods (with radius $a_0$) of $\gamma_0^\pm$, by $P_0^\pm, \Lambda_0^\pm$ the corresponding projections and signed distance functions, and by $\left( \Phi_0^\pm \right)^{-1} := (P_0^\pm, \Lambda_0^\pm)$ the respective inverse of the diffeomorphism $\gamma_0^\pm \times (-a_0,a_0) \rightarrow \mathcal{N}_0^\pm$, 
\[
\Phi_0^\pm(p,\delta) := p - \delta N^\pm_{\gamma_0}(p).
\]
We construct Lipschitz continuous homeomorphisms $\mathcal{N}_0^\pm \cap \Omega_0^\pm \rightarrow \mathcal{N}_0^\pm \cap \tilde \Omega_F^\pm$ as follows: We choose as cut-off function $\varphi_0 \in C^\infty([-a_0,0],[0,1])$, $\varphi_0(r) := 1 + r/a_0$. (Then $|\varphi_0'| =  1/a_0$.) Analogue to the embedded situation we can choose the mapping 
\[
\tilde \Theta_F^\pm(p) := P_0^\pm(p) - \left[ \Lambda^\pm_0(p) + \varphi_0 \left( \Lambda_0(p) \right) \tilde \rho^\pm \left( P^\pm_0(p) \right) \right] N_{\gamma_0}^\pm(P^\pm_0(p)), 
\]
$p \in \mathcal{N}_0^\pm$, and obtain 
\[
\tilde \Theta_F^\pm |_{\gamma_0^\pm} = \tilde \theta_F^\pm, \qquad \qquad \tilde \Theta_F^\pm |_{\gamma_0^\pm + a_0 N^\pm_{\gamma_0}} = \mathbf{Id}. 
\]
\begin{remark}
Observe that if $F \in \mathbf{B}_\varepsilon$ (and $\varepsilon > 0$ is small enough), then $|\tilde \rho^\pm| < < a_0/2$.
\end{remark}
\begin{lemma}
The homeomorphisms $\tilde \Theta_F^\pm$ are  uniformly Bi-Lipschitz w.r.t. $\rho \in \mathbf{B}_\varepsilon^\ast$.
\end{lemma}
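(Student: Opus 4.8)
The goal is to show that the maps $\tilde\Theta_F^\pm:\mathcal{N}_0^\pm\cap\Omega_0^\pm\to\mathcal{N}_0^\pm\cap\tilde\Omega_F^\pm$ are Bi-Lipschitz, with Lipschitz constants for $\tilde\Theta_F^\pm$ and $(\tilde\Theta_F^\pm)^{-1}$ bounded uniformly for $\rho\in\mathbf{B}_\varepsilon^\ast$ (with $\varepsilon$ small). I would work in the tubular coordinates supplied by $(\Phi_0^\pm)^{-1}=(P_0^\pm,\Lambda_0^\pm)$, i.e. conjugate $\tilde\Theta_F^\pm$ by the smooth diffeomorphism $\Phi_0^\pm$ and study the resulting map on $\gamma_0^\pm\times(-a_0,0]$. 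In these coordinates the map reads $(p,\delta)\mapsto(p,\ \delta+\varphi_0(\delta)\,\tilde\rho^\pm(p))$, i.e. it is the identity in the $\gamma_0^\pm$-variable and a shear in the normal variable with shear profile $g(p,\delta):=\delta+\varphi_0(\delta)\tilde\rho^\pm(p)$. Since $\Phi_0^\pm$ and its inverse are smooth diffeomorphisms on the compact sets in question, it suffices to prove the conjugated map is uniformly Bi-Lipschitz.

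The key estimates are then elementary. For fixed $p$, $\partial_\delta g=1+\varphi_0'(\delta)\tilde\rho^\pm(p)=1+\tilde\rho^\pm(p)/a_0$, which by Lemma~\ref{tsmall} (giving $\|\tilde\rho^\pm\|_{W_\infty^1}\le\mu\ll 1$, hence $|\tilde\rho^\pm|\le\mu$) lies in $[1-\mu/a_0,\,1+\mu/a_0]$, so $\delta\mapsto g(p,\delta)$ is a strictly monotone Bi-Lipschitz bijection of $(-a_0,0]$ with constants $1\pm\mu/a_0$, uniformly in $p$ and in $\rho$. In the tangential direction, for fixed $\delta$ the map is the identity, so I only need to control the mixed dependence: $|g(p,\delta)-g(q,\delta)|=|\varphi_0(\delta)|\,|\tilde\rho^\pm(p)-\tilde\rho^\pm(q)|\le \|\tilde\rho^\pm\|_{\mathrm{Lip}}\,d_{\gamma_0^\pm}(p,q)$, again uniformly bounded by $\mu$ times geodesic distance via Lemma~\ref{tsmall}. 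Combining, $\tilde\Theta_F^\pm$ (in tubular coordinates) is Lipschitz with constant $1+C\mu$; injectivity follows because the $\gamma_0^\pm$-component is preserved and, on each normal fibre, $g(p,\cdot)$ is injective. For the inverse I would either invert the shear explicitly, solving $\delta+\varphi_0(\delta)\tilde\rho^\pm(p)=\sigma$ for $\delta$ (a contraction/implicit-function argument since $\varphi_0'\tilde\rho^\pm$ is small), and estimate its Lipschitz constant in both variables, or simply invoke the standard fact that a Lipschitz map whose differential (which exists a.e. by Rademacher, since everything is Lipschitz) has determinant bounded away from $0$ and $\infty$ and which is a bijection is Bi-Lipschitz with the reciprocal bounds. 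Finally I would note that $\tilde\Theta_F^\pm$ maps $\mathcal{N}_0^\pm\cap\Omega_0^\pm$ onto $\mathcal{N}_0^\pm\cap\tilde\Omega_F^\pm$ and restricts on $\gamma_0^\pm$ to $\tilde\theta_F^\pm$ and on the outer boundary $\gamma_0^\pm+a_0N_{\gamma_0}^\pm$ to the identity, as already recorded, so the construction is consistent; the homeomorphism property on the closed half-neighborhoods then follows from continuity plus injectivity plus properness.

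The main obstacle is purely the low regularity: $\tilde\rho^\pm$ is only Lipschitz (not $C^1$), so $\tilde\Theta_F^\pm$ is merely Lipschitz and one cannot argue via a smooth inverse function theorem; one must extract the a.e.-differential, verify that $|\det D\tilde\Theta_F^\pm|$ is bounded above and below by constants depending only on $\mu$ and $a_0$ (this is where the block-triangular structure ``identity in $p$, shear in $\delta$'' of the conjugated map makes the determinant computation transparent, essentially $|\det|=|\partial_\delta g|=|1+\tilde\rho^\pm/a_0|$), and then invoke a Lipschitz inverse-function statement (e.g. from Ziegler~\cite{Zie}) to get quantitative Bi-Lipschitz bounds. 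Care must also be taken that the constants are genuinely uniform over $\rho\in\mathbf{B}_\varepsilon^\ast$, which is guaranteed because every bound above came from Lemma~\ref{tsmall}, whose constants are uniform on $\mathbf{B}_\varepsilon^\ast$.
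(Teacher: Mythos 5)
Your proposal is correct and follows essentially the same route as the paper: conjugate $\tilde\Theta_F^\pm$ by the tubular-coordinate map $\Phi_0^\pm$ to reduce to the shear $\Psi(p,r)=(p,\,r+\varphi_0(r)\tilde\rho^\pm(p))$, then use Lemma~\ref{tsmall} together with the linearity of $\varphi_0$ on $[-a_0,0]$ (so $\varphi_0'\equiv 1/a_0$, and the fibrewise inverse is given by the explicit formula) to obtain uniform two-sided Lipschitz bounds. The paper chooses your first alternative for the inverse (explicit inversion of the linear shear rather than a general Lipschitz inverse function theorem), which is cleaner here, but this is a presentational, not a substantive, difference.
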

\begin{proof}
Let $\Psi_{\tilde \rho^\pm} := \Psi: \gamma_0^\pm \times [- a_0,0] \rightarrow \bigcup_{p \in \gamma_0^\pm} \{ p \} \times [-a_0,\tilde \rho^\pm(p)]$ be defined by 
\[
\Psi(p,r) := (p,\psi_p(r)) := (p, r + \varphi_0(r)\tilde \rho^\pm(p)). 
\]
Since 
\begin{equation}\label{decomp}
\tilde \Theta_F^\pm|_{\mathcal{N}_0^\pm \cap \Omega_0^\pm} = \Phi_0^\pm \circ \Psi \circ \left( \Phi_0^\pm \right)^{-1}, 
\end{equation}
it suffices to show that $\Psi$ is uniformly Bi-Lipschitz. (In this proof we write $\tilde \rho$ instead of $\tilde \rho^\pm$ for simplicity.) Since $\varphi_0$ is smooth on $[-a_0,0]$ and by Lemma \ref{tsmall},
\[
|\tilde \rho(p) - \tilde \rho(q)| \leq C d_{\gamma_0}(p,q), \qquad \qquad \rho \in \mathbf{B}_\varepsilon^\ast, 
\]
$\Psi$ is uniformly Lipschitz, and it suffices to show the same for $\Psi^{-1}$. Observe that 
\[
\Psi^{-1}(q,s) = (q,\psi_q^{-1}(s)) = \left( q,\frac{s - \tilde \rho(q)}{1 + \tilde \rho(q) / a_0} \right)
\]
and , since $|\tilde \rho| << a_0 / 2$, 
\begin{eqnarray*}
|\psi_p^{-1}(r) - \psi_q^{-1}(s)| & = & \frac{|(r - \tilde \rho(p))(1 + \tilde \rho(q) / a_0) - (s - \tilde \rho(q))(1 + \tilde \rho(p) / a_0)|}{|1 + \tilde \rho(p) / a_0| |1 + \tilde \rho(q) / a_0 |} \\ 
& \leq & 4 ( 2 |r-s| + (1 + s / a_0)|\tilde \rho(p) - \tilde \rho(q)| ) \\
& \leq & 8 ( |r-s| + (|\tilde \rho(p) - \tilde \rho(q)| ).
\end{eqnarray*}
This implies the assertion. 
\end{proof}
By Rademachers Theorem, the homeomorphisms $\tilde \Theta_F^\pm: \mathcal{N}_0^\pm \cap \Omega_0^\pm \rightarrow \mathcal{N}_0^\pm \cap \tilde \Omega_F^\pm$ (and their respective inverses) are almost everywhere differentiable, and the structure of the first differential of $\Psi$ and $\Psi^{-1}$,  
\[
d_{(p,r)} \Psi = (v,\xi) \mapsto \begin{pmatrix} v \\ \varphi_0(r) (d_p \tilde \rho^\pm)(v) + \xi (1 + \varphi_0'(r) \tilde \rho^\pm(p)) \end{pmatrix} 
\]
($(p,r) \in \gamma_0 \times \RRM$, $(v,\xi) \in T_p \gamma_0 \times \RRM$),
\[
d_{(q,s)} \Psi^{-1} = (u,\zeta) \mapsto \begin{pmatrix} u \\ (d_q \psi_q^{-1}(s))(u) + \zeta ( 1 + a_0 / ( a_0 + \tilde \rho^\pm(q)) ) \end{pmatrix} 
\]
($(q,s) \in \tilde \gamma_F \times \RRM$, $(u,\zeta) \overset{ \mbox{\tiny{a.e.}} }{\in} T_q \tilde \gamma_F \times \RRM$),
implies that 
\begin{equation}\label{D1}
\norm{ D \tilde \Theta_F^\pm }_{L_\infty(\mathcal{N}_0^\pm \cap \Omega_0^\pm, \mathcal{L}(\mathbb{R}^2))} +  \norm{ D \left( \tilde \Theta_F^\pm \right)^{-1} }_{L_\infty(\mathcal{N}_0^\pm \cap \tilde \Omega^\pm_F, \mathcal{L}(\mathbb{R}^2))} \leq L,  
\end{equation}
\begin{equation}\label{D2}
\norm{ \tilde \Theta_F^\pm }_{\mbox{\scriptsize{Lip}}(\mathcal{N}_0^\pm \cap \Omega^\pm_0, \mathcal{N}_0^\pm \cap \tilde{\Omega}_F^\pm)} +  \norm{ \left( \tilde \Theta_F^\pm \right)^{-1} }_{\mbox{\scriptsize{Lip}}(\mathcal{N}_0^\pm \cap \tilde{\Omega}_F^\pm, \mathcal{N}_0^\pm \cap \Omega_0^\pm)} \leq L,   
\end{equation}
$F \in \mathbf{B}_\varepsilon$. Moreover, in view of Lemma \ref{tsmall}, 
\begin{eqnarray}\label{D3}
\norm{ \tilde \Theta_F^\pm - \mathbf{Id} }_{BUC(\mathcal{N}_0^\pm \cap \Omega^\pm_0, \mathcal{N}_0^\pm \cap \tilde{\Omega}_F^\pm)} + \norm{ D \tilde \Theta_F^\pm - \mathbf{Id} }_{L_\infty(\mathcal{N}_0^\pm \cap \Omega_0^\pm, \mathcal{L}(\mathbb{R}^2))} \nonumber \\ \nonumber  \\ << 1/L, 
\end{eqnarray}
\begin{eqnarray}\label{D4}
\norm{ \left(\tilde \Theta_F^\pm \right)^{-1} - \mathbf{Id} }_{BUC(\mathcal{N}_0^\pm \cap \tilde{\Omega}_F^\pm, \mathcal{N}_0^\pm \cap \Omega^\pm_0)} + \norm{ D \left( \tilde \Theta_F^\pm \right)^{-1} - \mathbf{Id} }_{L_\infty(\mathcal{N}_0^\pm \cap \tilde \Omega_F^\pm, \mathcal{L}(\mathbb{R}^2))} \nonumber \\ \nonumber  \\ << 1/L,  
\end{eqnarray}
$F \in \mathbf{B}_\varepsilon$. Gluing the homeomorphisms together via 
\[
\tilde \Theta_{F}(p) := \tilde \Theta_{F}^\pm(p), \qquad \qquad p \in \mathcal{N}_0^\pm \cap \Omega_0^\pm 
\]
( which makes sense, since 
\[
\tilde \Theta_{F}^+(p) = \tilde \Theta_{F}^-(p), \qquad \; p \in \{ F_0(s) + a \cdot N_0(s); \; s\in X^+ \cap X^-; \; a \in [0,a_0] \} \mbox{  ),}
\]
extension to $\Omega_0$ by the identity yields a Lipschitz continuous homeomorphism
\[
\tilde \Theta_F: \Omega_0 \rightarrow \tilde \Omega_F 
\]  
enjoying the analogue estimates. Moreover, 
\begin{eqnarray}\label{C1}
\rho \mapsto \tilde \Theta_{F(\rho)} & \in & B \left( \mathbf{B}^\ast_\varepsilon, \mbox{\small{Lip}}(\Omega_0, \tilde \Omega_{F}) \right) \cap C \left( \mathbf{E}^\ast_\varepsilon, \mbox{\small{Lip}}(\Omega_0, \tilde \Omega_{F}) \right),
\end{eqnarray}
\begin{eqnarray}\label{C2}
\rho \mapsto \left( D \tilde \Theta_{F(\rho)}, D \tilde \Theta_{F(\rho)}^{-1} \circ \tilde \Theta_{F(\rho)}\right) & \in & B \left( \mathbf{B}^\ast_\varepsilon, \left[ L_\infty(\Omega_0, \mathcal{L}(\mathbb{R}^2)) \right]^2 \right) \nonumber \\
& \cap & C \left( \mathbf{E}^\ast_\varepsilon, \left[ L_\infty(\Omega_0, \mathcal{L}(\mathbb{R}^2)) \right]^2 \right).
\end{eqnarray}

\begin{remark}
From now on we use the short notation $W_p^s(X) := W_p^s(X,\RRM)$ where $X \in \{ S, \Omega_0, \tilde \Omega_F \}$.
\end{remark}

We are now in the position to proceed analogue to the embedded situation from the beginning: Let $F \in \mathbf{B}_\varepsilon$. Then temperature in the extended Stefan problem is defined by the operator equation 
\begin{equation}\label{eSpB}
\tilde a_F(v,\varphi) = \tilde b_F(\varphi), \qquad \qquad \varphi \in W_2^1(\Omega_0), 
\end{equation}
where 
\begin{eqnarray*}
\tilde a_F(v,\varphi) & := & \int_{\Omega_0} |\mbox{det} D \tilde \Theta_F | ( \nabla v | \nabla \varphi )_{\tilde F} + \int_{S}|D (\tilde \theta_F \circ F_0)| (v \varphi)(F_0), \\
\tilde b_F(\varphi) & := & \int_{S} |D (\tilde \theta_F \circ F_0)| \tilde H_F \varphi(F_0) \\ 
\end{eqnarray*}
and  
\[
\left( \nabla v | \nabla \varphi \right)_{\tilde F} = \left( \; \left( D \tilde \Theta_F^{-1}\right)^T (\tilde \Theta_F) \nabla v \; | \;  \left( D \tilde \Theta_F^{-1} \right)^T (\tilde \Theta_F) \nabla \varphi \; \right).
\]
By (\ref{D1}) - (\ref{D4}) and (\ref{stheta}), the form $\tilde a_F: W_2^1(\Omega_0) \times W_2^1(\Omega_0) \rightarrow \mathbb{R}$ is coercive and bounded, and $\tilde b_F: W_2^1(\Omega_0) \rightarrow \mathbb{R}$  is bounded uniformly w.r.t. $F \in \mathbf{B}_\varepsilon$: The uniform boundedness of $\tilde a_F$ and $\tilde b_F$ is trivial, the uniform coercivity of $\tilde a_F$ is shown in the appendix.

\vspace{5mm}

We name $\tilde v_{\gamma_F} \in W_2^1(\Omega_0)$ the unique solution of (\ref{eSpB}). 
\begin{remark}\label{regun}
In case that $F$ is an embedding (i.e. $\tilde \rho(F) = \rho(F) \in \hat{\mathbf{E}}_\varepsilon^\ast$), $u_{\gamma_F} := \tilde v_{\gamma_F} \circ \tilde \Theta_{F}^{-1} \in W_2^1(\Omega_{F})$ is the unique weak solution of  
\[
\Delta u  = 0 \mbox{ in } \Omega_F, \qquad \qquad u - \partial_{N_{\gamma_F}} u = H_{\gamma_F} \mbox{ on } \gamma_F, 
\]
where $H_{\gamma_F} \in L_2(\gamma_F, \RRM)$, and in case $\gamma_F$ is smooth, $u_{\gamma_F} \in C^\infty(\overline{\Omega}_F,\RRM)$. Moreover, 
\[
u_F := u_{\gamma_F} |_{\gamma_F} \circ F = \tilde v_{\gamma_F} \circ \tilde \Theta_{F}^{-1} |_{\gamma_F} \circ (\tilde \theta_F \circ F_0) = \tilde v_{\gamma_F} |_{\gamma_0} \circ F_0.
\]
\end{remark}

\vspace{5mm}

The term $\tilde v_{\gamma_F} |_{\gamma_0} \circ F_0$ makes also sense in the non embedded situation (cf. section 2.1), and it is straightforward to check (using (\ref{D1}) - (\ref{C2}), (\ref{regH}) and  Lemma \ref{contws}, appendix) that
\begin{equation}\label{C3}
\rho \mapsto \tilde u_{F(\rho)} := \tilde v_{\gamma_{F(\rho)}} \,|_{\gamma_0} \circ F_0 \in B(\mathbf{B}_\varepsilon^*, W_2^{1/2}(S)) \cap C(\mathbf{E}_\varepsilon^*, W_2^{1/2}(S)).
\end{equation}
The extended Stefan problem becomes the quasilinear evolution equation 
\[
\dot{\rho} = \frac{1}{(N_0 | N_{F(\rho)})} \left( - H_{F(\rho)} + \tilde u_{F(\rho)} \right). 
\]
It is important for the solution theory, that $N_{F(\rho)}$ and $H_{F(\rho)}$ do not have a tilde.  

\vspace{5mm}

\section{The fixed point argument}

\vspace{5mm}

The principle part of $H_{F(\rho)}$ is 
\[
\frac{- \rho''}{|F_0'|^4} \left( y_0' \left( y_0 - \frac{\rho x_0'}{|F_0'|} \right)' + x_0' \left( x_0 + \frac{\rho y_0'}{|F_0'|} \right)' \right).
\] 
Oberve that 
\[
\sigma \mapsto \lambda(\sigma) := \frac{1}{(N_0 | N_{F(\sigma)})} \in C^\infty(\mathbf{B}^*_\varepsilon,W_2^1(S))
\]
and
\[
|\lambda(\sigma) - 1 | << 1 \qquad \qquad (\sigma \in \mathbf{B}^*_\varepsilon).
\]
Let $\sigma \in \mathbf{B}^*_\varepsilon$ (if necessary identified with the constant mapping $t \mapsto \sigma$ ($t \in \mathbb{R}$). For suitable $\omega \in \RRM$ (to be determined later)  
define the operator $P_\omega := P \in C^\infty( \mathbf{B}_\varepsilon^\ast, \mathcal{L}(W_2^{2+\delta}(S),W^{\delta}_2(S)) )$ ($\delta \in [0,1/4]$) by 
\[
P_\omega(\sigma) := P(\sigma) := \rho \mapsto \frac{\lambda(\sigma)}{|F_0'|^4} \left( y_0' \left( y_0 - \frac{\sigma x_0'}{|F_0'|} \right)' + x_0' \left( x_0 + \frac{\sigma y_0'}{|F_0'|} \right)' \right) \rho'' - \omega \rho. 
\]
Choose 
\[
Q_\omega := Q \in C^\infty(W_2^2(S), W_2^1(S))
\]
by 
\[
- \lambda(\sigma) H_{F(\sigma)} = P(\sigma)\sigma + Q(\sigma).
\]
(Observe that $W_2^\alpha(S)$ is an algebra w.r.t. pointwise multiplication provided $\alpha > 1/2$.) We can rewrite the extended Stefan problem as
\begin{equation}
\dot{\rho} - P(\rho)\rho = Q(\rho) + \lambda(\rho) \tilde u_{F(\rho)}, \qquad \qquad \rho(0) = \rho_0.
\end{equation}

\vspace{5mm}

\subsection{Time trace zero}

\vspace{5mm}

For this section we introduce the balls $\undertilde{\mathbf{B}}^*_\varepsilon := \mathbb{B}_{W_2^{2+1/4}(S,\RRM)}(0, \varepsilon)$. The sets $\undertilde{\mathbf{E}}^*_\varepsilon$, $\undertilde{ \hat{\mathbf{E}}}^*_\varepsilon$ are defined in the obvious way.

\vspace{5mm}

Let $\zeta := \rho - \rho_0$. We aim to solve the equation 
\begin{equation}
\dot{\zeta} - P(\zeta + \rho_0)\zeta = P(\zeta + \rho_0)\rho_0 + Q(\zeta + \rho_0) + \tilde w(\zeta+ \rho_0)
\end{equation}
with the initial demand $\zeta(0) = 0$ and $\tilde w(\zeta+ \rho_0) := \lambda(\zeta + \rho_0) \tilde u_{F(\zeta + \rho_0)}$.
Observe that the spaces $W_2^{2+1/8}$, $W_2^2(S)$, $W_2^1(S)$, $W_2^{1/2}(S)$ are interpolation spaces of exponents $15/16$, $7/8$, $3/8$, $1/8$, respectively, w.r.t. the interpolation couple $\left( W_2^{1/4}(S), W_2^{2+1/4}(S) \right)$. Because of the embedding $W_2^1(S) \hookrightarrow C^{3/8}(S) \hookrightarrow W_2^{1/4}(S)$, elements of $W_2^1(S)$ are multipliers for $W_2^{1/4}(S)$, i.e. 
\[
W_2^1(S) \cdot W_2^{1/4}(S) \hookrightarrow W_2^{1/4}(S).
\]
From this one concludes by standard arguments, 
\[
-P(\zeta + \rho_0) \subset \mathcal{H}\left(W_2^{2+1/4}(S),W_2^{1/4}(S)\right), \qquad \qquad \zeta + \rho_0: [0,\tau] \rightarrow \mathbf{B}_\varepsilon^\ast, 
\]
where $\mathcal{H}(E,F)$ denotes the set of negative generators $A \in \mathcal{L}(E,F)$ of analytic semigroups.

\vspace{5mm}

Consider for a fixed $\delta_0 > 0$ the bounded, closed, convex subset of $C([0,\tau],W_2^2(S))$ defined by 
\begin{eqnarray*}
& & \mathcal{M}(\delta_0, \tau) := \mathcal{M}(\tau)  := \\
& & \{ \zeta \in C([0,\tau],W_2^2(S)), \quad \zeta(0) = 0, \quad \Vert \zeta(t) \Vert_{W_2^2(S)} \leq \delta_0, \\
& & \Vert \zeta(t)-\zeta(s) \Vert_{W_2^2(S)} \leq |t-s|^{1/32}, \quad t,s \in [0,\tau] \}.
\end{eqnarray*}
According to the results from sections II 4, 5 in \cite{LaQPP} (namely II 4.4.1, II 4.4.2, II 5.1.3), for each $\zeta \in \mathcal{M}(\tau)$ there is a parabolic evolution operator $U_{P(\zeta + \rho_0)}$ associated with $P(\zeta + \rho_0)$ that satisfies the following uniform estimates (here we have chosen $\omega \in \RRM$ suitably in order to pin the resolvent sets appropriately):
\begin{eqnarray*}
    & \Vert U_{P(\zeta + \rho_0)}(t,s) \Vert_{\mathcal{L}\left(W_2^{1/4}(S), W_2^2(S)\right)} \\
 + & \Vert U_{P(\zeta + \rho_0)}(t,s) \Vert_{\mathcal{L}\left(W_2^{1/2}(S), W_2^2(S)\right)}\\
 + & \Vert U_{P(\zeta + \rho_0)}(t,s) \Vert_{\mathcal{L}\left(W_2^{1}(S), W_2^2(S)\right)}  \\
\leq & M \left( |t-s|^{-7/8} + |t-s|^{-3/4} + |t-s|^{-1/2} \right), 
\end{eqnarray*}
\begin{eqnarray*}
    & \Vert U_{P(\zeta + \rho_0)}(t,s) \Vert_{\mathcal{L}\left(W_2^{1/4}(S), W_2^{2+1/8}(S)\right)} \\
 + & \Vert U_{P(\zeta + \rho_0)}(t,s) \Vert_{\mathcal{L}\left(W_2^{1/2}(S), W_2^{2+1/8}(S)\right)} \\
 + & \Vert U_{P(\zeta + \rho_0)}(t,s) \Vert_{\mathcal{L}\left(W^{1}_{2}(S), W_2^{2+1/8}(S)\right)} \\
\leq & M \left( |t-s|^{-15/16} + |t-s|^{-13/16} + |t-s|^{-9/16} \right)
\end{eqnarray*}
and 
\begin{equation}\label{equalreg}
\Vert U_{P(\zeta + \rho_0)}(t,s) \Vert_{\mathcal{L}\left(W^{\alpha}_{2}(S), W_2^{\alpha}(S)\right)} \leq M, \qquad \qquad \alpha \in [1/4,2+1/4], 
\end{equation}
where $M$ is independent of $\zeta \in \mathcal{M}(\tau)$, $\rho_0 \in \undertilde{\mathbf{B}}^*_\varepsilon$ and $\tau \in [0,T]$. Solving our problem is equivalent to identifying a fixed point of the mapping $\zeta \mapsto \xi$ where $\xi$ solves 
\begin{equation}\label{xi}
\xi_t - P(\zeta + \rho_0)\xi = P(\zeta + \rho_0)\rho_0 + Q(\zeta + \rho_0) + \tilde w(\zeta + \rho_0)  
\end{equation}
and by variation of constants is given (as a mild solution) by
\begin{eqnarray}\label{xi2}
\\ \xi(t) = \int_0^t U_{P(\zeta + \rho_0)}(t,\tau) \left( P(\zeta + \rho_0)\rho_0 + Q(\zeta + \rho_0) + \tilde w(\zeta + \rho_0) \right)(\tau,\cdot) \; d\tau. & \nonumber   
\end{eqnarray}
Note that we may assume that 
\[
\Vert Q(\zeta + \rho_0)(t,\cdot) \Vert_{W_2^1(S)} + \Vert \tilde w(\zeta + \rho_0)(t,\cdot) \Vert_{W_2^{1/2}(S)} \leq M,  
\]
\[
\Vert P(\zeta + \rho_0)\rho_0 \Vert_{W_2^{1/4}(S)} \leq M, \qquad \qquad (\zeta,t,\rho_0) \in \mathcal{M}(\tau) \times [0,\tau] \times \undertilde{\mathbf{B}}^*_\varepsilon.
\]
Hence, 
\[
\Vert \xi(t) \Vert_{W_2^{2}(S)} \leq 2 M^2 \left( t^{1/8} + t^{1/2} + t^{1/4} \right), \qquad \qquad t \in [0,\tau], 
\]
and
\[
\Vert \xi(t) \Vert_{W_2^{2+1/8}(S)} \leq 2 M^2 \left( t^{1/16} + t^{7/16} + t^{3/16} \right), \qquad \qquad t \in [0,\tau].
\]
(The upper bounds are both $\leq \delta_0$, if $\tau > 0$ is small enough.) 
Theorem II 5.3.1 in \cite{LaQPP} yields 
\begin{eqnarray*}
\Vert \xi(t) - \xi(s) \Vert_{W_2^2(S)} & \leq & C |t-s|^{1/16} \\
& \leq & C (2\tau)^{1/32} |t-s|^{1/32}, 
\end{eqnarray*}
where $C > 0$ is independent of $(\zeta,t,\rho_0) \in \mathcal{M}(\tau) \times [0,\tau] \times \undertilde{\mathbf{B}}^*_\varepsilon$. Therefore, if $\tau > 0$ is small enough, then
\[
\zeta \mapsto \xi: \mathcal{M}(\tau) \rightarrow \mathcal{M}(\tau).
\]
By the compactness of the embedding $W_2^{2+1/8}(S) \hookrightarrow W_2^2(S)$ and the Arzel\`{a} - Ascoli theorem, we obtain the existence of the desired fixed point $\xi \in \mathcal{M}(\tau)$ by the Schauder fixed point Theorem.  (In the sequel we assume that $\delta_0 > 0$ in the definition of $\mathcal{M}(\tau) = \mathcal{M}(\delta_0, \tau)$ is sufficiently small.)

\vspace{5mm}

Consider now a sequence $\rho_0^n \rightarrow 0$ in $\undertilde{\hat{\mathbf{E}}}_\varepsilon^\ast \subset \undertilde{\mathbf{B}}^*_\varepsilon$. (The $\rho_0^n$ correspond to embedded initial states.) The corresponding solutions $\rho_n := \rho_0^n + \xi_n$ lie (by almost the same argument as above) relatively compact in $\mathcal{M}(\tau)$. Therefore, after passing to a subsequence again denoted by $\rho_n$, $\rho_n \rightarrow \rho_\infty \in \mathcal{M}(\tau)$. Assume that 
\begin{equation}\label{Ann}
\bigcup_{n > 0, \; t \in [0,\tau]} \rho_n(t) \subset \mathbf{E}_\varepsilon^\ast.
\tag{A}
\end{equation}
Then $\rho_\infty \in C([0,\tau],\mathbf{E}_\varepsilon^\ast)$. Since $\rho \mapsto \tilde u_{F(\rho)} \in C(\mathbf{E}_\varepsilon^\ast,W_2^{1/2}(S))$, it is a direct consequence of (\ref{xi2}) and Lemma II 5.1.4 in \cite{LaQPP} that 
\[
\rho_\infty(t) = \int_0^t U_{P(\rho_\infty)}(t,\tau) \left( Q(\rho_\infty) + \tilde w(\rho_\infty) \right)(\tau,\cdot) \; d\tau, 
\]
$\rho_\infty \in C^1((0,\tau],L_2(S))$ (Theorem II 1.2.2 in \cite{LaQPP}) and $\rho_\infty(0) = 0$. Since $\mathcal{M}(\tau) \hookrightarrow C([0,\tau] \times S)$, given $\eta > 0$ there is $n \in \mathbb{N}$ s.t. 
\begin{equation}\label{sing}
\sup_{(t,s) \in [0,\tau] \times S} |\rho_\infty(t,s) - \rho_n(t,s)| < \eta. 
\end{equation}
Moreover, 
\[
\dot{\rho}_\infty = \lambda(\rho_\infty) \left( - H_{F(\rho_\infty)} + \tilde u_{F(\rho_\infty)} \right) \in C([0,\tau],L_2(S)),   
\]
but $\dot{\rho}_{\infty}(0) = - H_{F_0} + \tilde u_{F_0}$ is actually smooth. Suppose that, in addition, $\dot{\rho}_{\infty}(0)$ is positive on some small open sets $U^\pm \subset \mathcal{A}_0^\pm$. This implies $\rho_\infty(\tau_0,s_0^\pm) := \eta_0 > 0$ for some $(\tau_0,s_0^\pm) \in (0,\tau] \times U^\pm$: 

\vspace{5mm}

Indeed, by the regularity of $\dot{\rho}_\infty$, 
\[
\frac{1}{t_n} (\rho_\infty(t_n,s) - \rho_\infty(0,s)) \rightarrow \dot{\rho}_\infty(0,s) \qquad \qquad  \mbox{ a.e. on } U^\pm.
\]
By (\ref{sing}), 
\[
\rho_n(\tau_0,s_0^\pm) > \eta_0/2 \qquad \qquad \mbox{ for $n > 0$ large enough. } 
\]
This contradicts (A) and verifies $\lnot$(A):
\[
\exists \; (n,\tau_0) \in \mathbb{N} \times [0,\tau]: \quad \rho_n(\tau_0) \neq \tilde \rho_n(\tau_0).
\]
Alltogether:

\vspace{5mm}
 
\begin{theorem}
There exists a time $T > 0$ with the following properties: 
\begin{itemize}
\item[1)] Let $\rho_0 := 0$. If $u_0 \in W_2^1(\Omega_0)$ satisfies the compatibilty condition 
\[
\int_{\Omega_0} ( \nabla u_0 | \nabla \varphi ) + \int_{S}|F_0'| (u_0 \varphi)(F_0) = \int_{S} |F_0'| H_{F_0} \varphi(F_0), \qquad \varphi \in W_2^1(\Omega_0), 
\]
the extended Stefan problem hat at least one mild solution 
\[
\rho \in C^{1/32}([0,T], W_2^2(S)), \qquad \qquad \tilde u_{F(\rho)} \in L_\infty([0,T] \rightarrow W_2^1(\tilde \Omega_F)).
\]
If, in addition, $H_{F_0} - \mbox{\normalfont tr }u_0 < 0$ in open subsets of $\mathcal{A}_0^\pm$, this solution develops self-intersections.
\item[2)] There exist $\varepsilon > 0$ and $\rho_0 \in \hat{\mathbf{E}}^*_\varepsilon$ (i.e. $\hat F_0 := F(\rho_0) = F_0 - \rho_0 N_{F_0}$ is an embedding) s.t. the same holds true, provided $u_0 \in W_2^1(\Omega_0)$ satisfies the compatibilty condition 
\begin{eqnarray*}
& & \int_{\Omega_0} |\mbox{\rm det} D \Theta_{\hat F_0} | ( \nabla u_0 | \nabla \varphi )_{\hat F_0} + \int_{S}|D \Theta_{\hat F_0}(F_0) F_0'| (u_0 \varphi)(F_0) \\ 
& = & \int_{S} |D \Theta_{\hat F_0}(F_0) F_0'| H_{\hat F_0} \varphi(F_0), \qquad \varphi \in W_2^1(\Omega_0).
\end{eqnarray*}
In particular, the corresponding solution $\rho_\varepsilon$ exists on the same interval $[0,T]$. (Clearly, at the moment $t_0$ a touching point occurs in $\tilde \gamma_{F(\rho(t_0,\cdot))}$, the Stefan problem is no longer well-posed in the classical sense.) Standard theory implies that prior to $t_0$, $\rho_\varepsilon$, $u_{F(\rho_\varepsilon)}$ are $C^\infty$ - functions of space and time, and that $(\rho_\varepsilon, u_{F(\rho_\varepsilon)})$ is a classical solution that is also unique.
\end{itemize}
\end{theorem}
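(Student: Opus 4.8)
The plan is to harvest the machinery already assembled: the Schauder fixed point construction of Section~3.1 produces the solution, while a compactness-plus-continuity argument in the spirit of \cite{MS} propagates the qualitative behaviour from the critical, non-embedded configuration $F_0$ to genuinely embedded neighbours. Throughout I would fix $T$ to be the existence time $\tau$ of that fixed point argument; it is uniform over $\rho_0\in\undertilde{\mathbf{B}}^*_\varepsilon$ and therefore serves every initial datum appearing below. The realizability of an $F_0$ together with a $u_0$ satisfying both the compatibility condition and the sign condition $H_{F_0}-\mathrm{tr}\,u_0<0$ on open subsets of $\mathcal{A}_0^\pm$ is the content of Section~4 and is assumed here.

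\emph{Part 1.} With $\rho_0:=0$ one has $\tilde\rho(0)=0$, because $\mathcal{O}^\pm(F_0)=\emptyset$ ($y_0\geq0$ on $S^+$, $y_0\leq0$ on $S^-$); hence $\tilde\Theta_{F_0}$ is the identity, $\tilde\theta_{F_0}\circ F_0=F_0$, $\tilde H_{F_0}=H_{F_0}$, and $\tilde a_{F_0}$ reduces to the standard Dirichlet--Robin form on $\Omega_0$, so the compatibility condition says precisely that $u_0=\tilde v_{\gamma_{F_0}}$ and therefore $\tilde u_{F_0}=\mathrm{tr}\,u_0$. Running the fixed point argument yields a mild solution $\rho\in C^{1/32}([0,T],W_2^2(S))$ with $\rho(0)=0$, and the reconstruction $\tilde u_{F(\rho)}=\tilde v_{\gamma_{F(\rho)}}\circ\tilde\Theta_{F(\rho)}^{-1}$ together with the uniform coercivity of $\tilde a_F$ gives $\tilde u_{F(\rho)}\in L_\infty([0,T],W_2^1(\tilde\Omega_F))$. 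For the self-intersection I would argue by contradiction: suppose $\rho(t)\in\mathbf{E}^*_\varepsilon$ for all $t\in[0,T]$, i.e. no genuine overlap ever forms. Then $\rho\in C([0,T],\mathbf{E}^*_\varepsilon)$, and by the continuity \eqref{C3} of $\rho\mapsto\tilde u_{F(\rho)}$ on $\mathbf{E}^*_\varepsilon$, by \eqref{regH}, and by stability of the parabolic evolution operators (\cite{LaQPP}, II~5.1.4), $\rho$ is a genuine solution of $\dot\rho=\lambda(\rho)(-H_{F(\rho)}+\tilde u_{F(\rho)})$ with $\rho\in C^1((0,T],L_2(S))$ and $\dot\rho(0)=-H_{F_0}+\tilde u_{F_0}=\mathrm{tr}\,u_0-H_{F_0}$, which is smooth and, by the extra hypothesis, strictly positive on open sets $U^\pm\subset\mathcal{A}_0^\pm$. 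Passing to the a.e. limit of $t^{-1}(\rho(t,\cdot)-\rho(0,\cdot))$ then produces $(\tau_0,s_0^\pm)\in(0,T]\times U^\pm$ with $\rho^\pm(\tau_0,s_0^\pm)>0$. But on $\mathcal{A}_0^\pm$ one has $y_0=0$ and $n_0^2\sim\pm1$, so $y^\pm(\tau_0,s_0^\pm)=-\rho^\pm(\tau_0,s_0^\pm)n_0^2(s_0^\pm)$ has the sign opposite to $\pm$; hence $s_0^\pm\in\mathcal{O}^\pm(F(\rho(\tau_0)))$, so $\tilde\rho^\pm(F(\rho(\tau_0)))=y_0/n_0^2=0\neq\rho^\pm(\tau_0)$ at $s_0^\pm$, contradicting $\rho(\tau_0)\in\mathbf{E}^*_\varepsilon$ (Definition~\ref{defE}). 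Thus at some first time $t_0\in(0,T]$ the curve $\gamma_{F(\rho(t_0))}$ crosses $\{y=0\}$, and the symmetry $p\in\gamma_0\Leftrightarrow\check p\in\gamma_0$ forces $\gamma^+_{F}$ and $\gamma^-_{F}$ to overlap — a genuine self-intersection.

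\emph{Part 2.} Here I would run the contradiction through the sequence already set up before the theorem: choose $\rho_0^n\to0$ in $\undertilde{\hat{\mathbf{E}}}^*_\varepsilon$ (embedded data), let $\rho_n=\rho_0^n+\xi_n\in\mathcal{M}(T)$ be the associated fixed points, and extract via the compact embedding $W_2^{2+1/8}(S)\hookrightarrow W_2^2(S)$ and Arzel\`a--Ascoli a subsequence $\rho_n\to\rho_\infty$ in $\mathcal{M}(T)$. If \eqref{Ann} held for all $\rho_n$, then, exactly as in Part 1, $\rho_\infty\in C([0,T],\mathbf{E}^*_\varepsilon)$ would be a genuine solution with $\dot\rho_\infty(0)=\tilde u_{F_0}-H_{F_0}>0$ on $U^\pm$, forcing $\rho_\infty(\tau_0,s_0^\pm)=\eta_0>0$; the uniform bound $\mathcal{M}(T)\hookrightarrow C([0,T]\times S)$ then yields $\rho_n(\tau_0,s_0^\pm)>\eta_0/2$ for $n$ large, contradicting $\rho_n\in\mathbf{E}^*_\varepsilon$ by the sign computation above. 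Hence $\lnot$\eqref{Ann}: there is a pair $(n,\tau_0)$ with $\rho_n(\tau_0)\neq\tilde\rho_n(\tau_0)$, so the solution with the \emph{embedded} initial datum $\rho_0:=\rho_0^n$ develops a self-intersection on $[0,T]$. The compatibility condition stated in Part 2 is exactly the weak formulation of $\Delta u=0$ in $\Omega_0$, $u-\partial_N u=H$ on $\gamma_0$, written in the Hanzawa coordinates $\Theta_{\hat F_0}$ attached to $\hat F_0:=F(\rho_0)$, which is what makes $\rho_0^n$ admissible initial data for the flow. Finally, since $\rho_n$ stays embedded on $[0,t_0)$, Remark~\ref{regun} applies there, and standard parabolic regularity and uniqueness for the reduced equation $\dot\rho=\lambda(\rho)(-H_{F(\rho)}+u_{F(\rho)})$ upgrade $(\rho_n,u_{F(\rho_n)})$ to the unique $C^\infty$ classical solution prior to the topological change.

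The step I expect to be the crux is the one common to both parts: verifying that, \emph{under the standing assumption that embeddedness persists}, the limit $\rho_\infty$ (respectively the mild solution itself in Part 1) is a genuine solution of the evolution equation and has the computable time-derivative $\dot\rho(0)=-H_{F_0}+\tilde u_{F_0}$ at the origin. This rests on three ingredients: (i) the continuity \eqref{C3}, available only on the cut set $\mathbf{E}^*_\varepsilon$ and not on all of $\mathbf{B}^*_\varepsilon$ — which is precisely why the argument must be indirect; (ii) stability of the parabolic evolution operators under $\rho_n\to\rho_\infty$ (\cite{LaQPP}, II~5.1.4), needed to pass to the limit in the variation-of-constants formula \eqref{xi2}; and (iii) the parabolic smoothing (\cite{LaQPP}, II~1.2.2) that turns a mild solution with $\rho(0)=0$ into one with $\dot\rho\in C([0,T],L_2(S))$ and a well-defined trace at $t=0$. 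The remaining pieces — the Section~4 construction of $F_0$ and $u_0$, the geometric translation of $\rho^\pm>0$ on $\mathcal{A}_0^\pm$ into a crossing of $\{y=0\}$, and the uniformity of $T$ — are routine given the preceding sections.
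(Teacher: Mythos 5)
Your proposal is correct and follows essentially the same route as the paper: Schauder fixed point in $\mathcal{M}(\tau)$ for existence (with $T$ chosen uniformly over $\rho_0\in\undertilde{\mathbf{B}}^*_\varepsilon$), the indirect argument on $\mathbf{E}^*_\varepsilon$ to upgrade the mild solution via continuity of $\rho\mapsto\tilde u_{F(\rho)}$ and Amann's II~5.1.4 / II~1.2.2, the positivity of $\dot\rho(0)=-H_{F_0}+\tilde u_{F_0}$ on $U^\pm$, the difference-quotient limit yielding $\rho(\tau_0,s_0^\pm)>0$, and the contradiction with $\rho(\tau_0)\in\mathbf{E}^*_\varepsilon$ via the cut-off definition. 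The one structural difference is cosmetic: you first run the contradiction directly for the critical datum $\rho_0=0$ (Part~1) and only then deploy the sequence $\rho_0^n\to0$ for the embedded perturbations (Part~2), whereas the paper handles both at once through the single hypothesis~\eqref{Ann} on the family $(\rho_n)$ together with its limit $\rho_\infty$. You also spell out two steps the paper leaves implicit and which are worth having on record: that $\rho_0=0$ forces $\mathcal{O}^\pm(F_0)=\emptyset$ and hence $\tilde\Theta_{F_0}=\mathbf{Id}$, so the compatibility condition really is equivalent to $u_0=\tilde v_{\gamma_{F_0}}$ and $\tilde u_{F_0}=\mathrm{tr}\,u_0$; and the sign computation $y^\pm(\tau_0,s_0^\pm)=-\rho^\pm(\tau_0,s_0^\pm)n_0^2(s_0^\pm)$ on $\mathcal{A}_0^\pm$ (where $y_0=0$, $n_0^2\sim\pm1$) that shows $\rho(\tau_0,s_0^\pm)>0$ places $s_0^\pm$ in $\mathcal{O}^\pm$ and hence $\rho(\tau_0)\neq\tilde\rho(\tau_0)$. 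One small misattribution: in the direct Part~1 argument there is no sequence to stabilize, so \cite{LaQPP}~II~5.1.4 is not needed there; II~1.2.2 (plus continuity of the right-hand side on $\mathbf{E}^*_\varepsilon$) suffices to get $\rho\in C^1$ and the trace of $\dot\rho$ at $t=0$.
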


\vspace{5mm}

\section{A suitable initial state}

\vspace{5mm}

\begin{minipage}{0.5\textwidth}
We start with a domain $\Omega$ with boundary curve $\Gamma$ oriented as in the picture on the right: The circular parts on the left have radii $R_1, R_2 \sim R >>1$, the circular parts on the right (with center $(x_0,\pm 1)$) have radius $1$. Parameterize (first) all parts by arclength to ensure that the curve is $C^1$.
\end{minipage}
\begin{minipage}{0.2\textwidth}
\[
\;
\] 
\end{minipage}
\begin{minipage}{0.3 \textwidth}
\includegraphics[width=6cm,height=7cm]{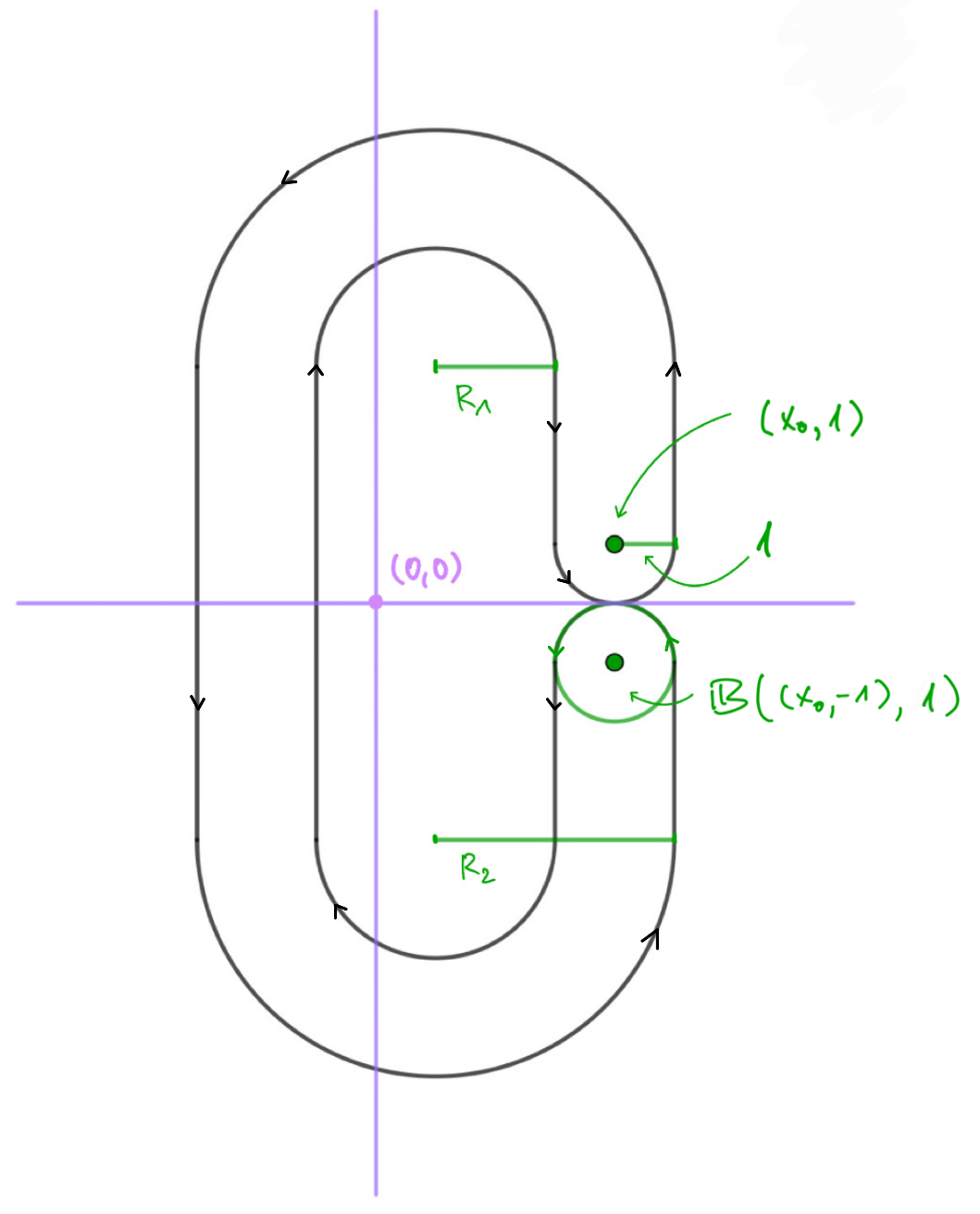}
\end{minipage} 

\vspace{10mm}  

\begin{minipage}{0.3\textwidth}
\includegraphics[width=3.7cm,height=5.2cm]{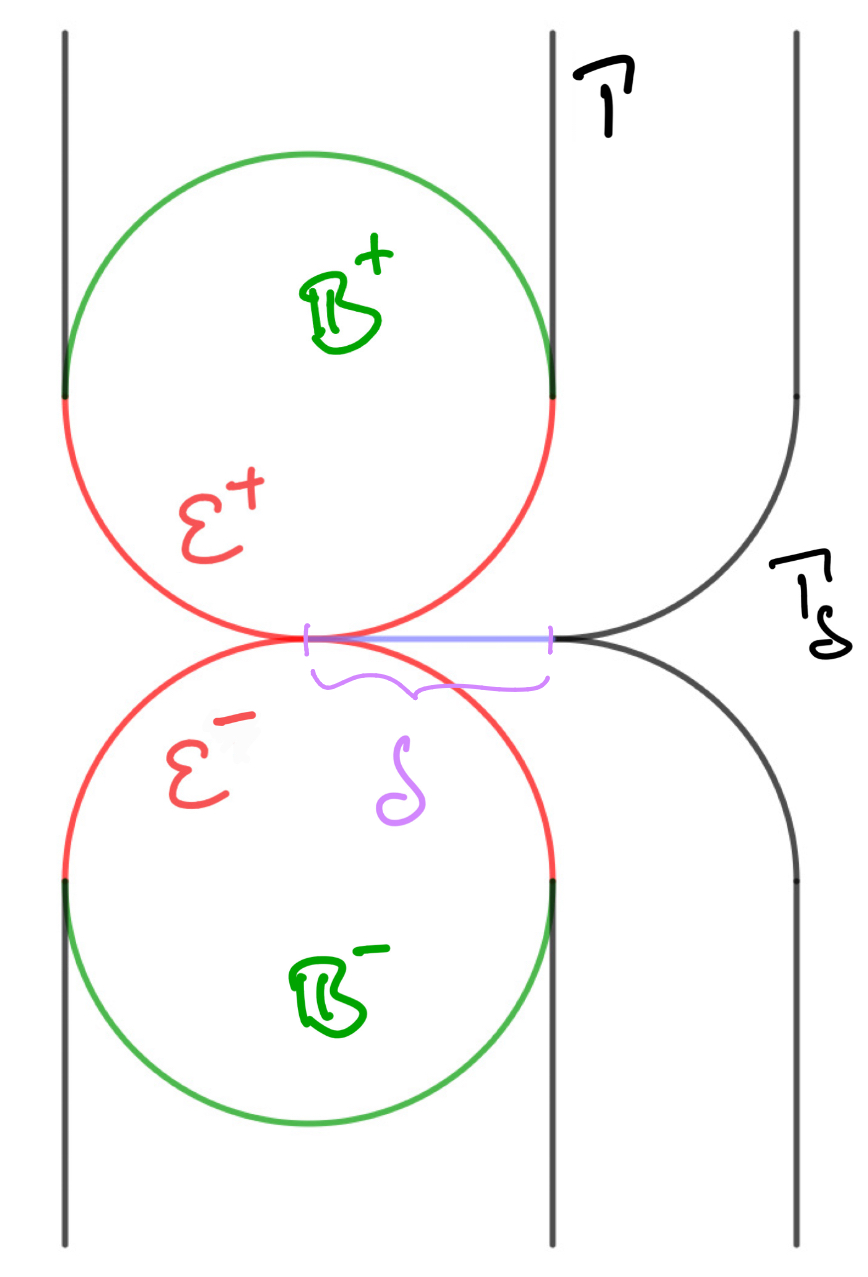}
\end{minipage} 
\begin{minipage}{0.2\textwidth}
\[
\;
\] 
\end{minipage}
\begin{minipage}{0.5\textwidth}
Next we introduce small flat parts of length $\delta$ in both  balls on the right (which we call $\mathbb{B}^\pm$) at the touching point $(x_0,0)$ (parameterized by arclength) s.t. we get a common flat segment. Now, reparameterize both curves ($\Gamma$ and $\Gamma_\delta$) over $S$.
\end{minipage} 

\vspace{10mm}  

Let $H$ and $H_\delta$ be the respective piecewise constant curvature functions. Arclength parameterizations in $C^1(S) \cap W_\infty^2(S)$ are then given by 
\[
G(s) := G(s_0) + \int_{s_0}^s \left( \cos(\theta(\sigma)), \sin(\theta(\sigma))  \right) \; d\sigma, \quad \qquad \theta(s) := \theta_0 + \int_{s_0}^s J(\sigma) \; d\sigma, 
\]
where $G \in \{ F, F_\delta \}$, $J \in \{ H, H_\delta \}$ and $G(s_0)$, $\theta_0$ are chosen suitably. From this formula it is clear that (since $H_\delta \rightarrow H$ in $L_2(S)$)
\begin{equation}\label{conFd}
F_\delta \rightarrow F \qquad \qquad \mbox{ in } C^1(S) \cap W_2^2(S) \qquad \qquad (\delta \rightarrow 0).
\end{equation}
We apply the following smoothing procedure:

\vspace{5mm}

We smooth the curvature of $\Gamma$, $\Gamma_\delta$ by means of the standard mollifier. Since the curves we discussed are in $W_2^\infty$ and have bounded (by $+1$ at the half circles on the right and $\sim \pm 1/R$ at the inner half circles on the left) curvature, the smoothing process converges in $L_p(S)$ ($p \geq 1$) and leaves the constant-curvature-parts uneffected modulo arbitrarily small errors.  However, the resulting curves (uniquely determined except for translation and rotation, which both is assumed to be performed suitably) may not be closed any more. The line segments and large circles on the left can be suitably shortened / extended to preserve the desired shapes. Since derivatives of a convolution can be placed to act on the mollifier, 
\begin{equation}\label{aphdelta}
\sup_{0 < \delta < \delta_0} \Vert H_\delta \Vert_{C^k(S)} + \Vert H \Vert_{C^k(S)} \leq M_k, \qquad \qquad k \in \mathbb{N},
\end{equation}
and hence also 
\begin{equation}\label{apfdelta}
\sup_{0 < \delta < \delta_0} \Vert F_\delta \Vert_{C^k(S)} + \Vert F \Vert_{C^k(S)} \leq M_k, \qquad \qquad k \in \mathbb{N}.
\end{equation}
By the young inequality, 
\begin{eqnarray*}\label{con2}
\Vert H_\delta \ast \varphi_\zeta - H \ast \varphi_\zeta \Vert_{L_2(S)} & = & \Vert (H_\delta - H) \ast \varphi_\zeta \Vert_{L_2(S)} \nonumber \\ 
& \leq & \Vert H_\delta - H \Vert_{L_2(S)} \Vert \varphi_\zeta \Vert_{L_1(S)} \nonumber \\
& = & \Vert H_\delta - H \Vert_{L_2(S)}. 
\end{eqnarray*}
From this and the fact that the 'length of the closedness preserving fills / cuts' goes to zero as $\zeta \rightarrow 0$ one derives that (\ref{conFd}) survives the smoothing procedure.

\vspace{5mm}

In the next section, we focus on the 'smaller' domains / curves without the '$\delta$-segments'.

\vspace{5mm}

The smoothed curve $\Gamma^{(s)}$ (again denoted by $\Gamma$ in the following) still contains two almost half-circles with radius 1, denoted by $\mathcal{E}^\pm$. Since convolution with a mollifier preserves local monotonicity of the curvature, we have that $\mathbb{B}^\pm = \mathbb{B}((x_0,\pm1),1) \subset \Omega$, $\overline{\mathbb{B}^\pm} \subset \overline{\Omega}$. (We denote the domain $\Omega^{(s)}$ inside $\Gamma^{(s)}$ again by $\Omega$.) 

\vspace{5mm}

We now introduce the parameter $\varepsilon > 0$ by $\varepsilon := 1/R$ and change notation from $\{ \Omega, \Gamma, F, \mathcal{E}^\pm \}$ to $\{ \Omega_\varepsilon, \Gamma_\varepsilon, F_\varepsilon, \mathcal{E}_\varepsilon^\pm \}$ in order to emphazise that we are going to leave the circular parts on the right with radius $1$ fixed but will enlarge the circular parts on the left with radius $\sim 1/\varepsilon$ suitably. Let $u_\varepsilon$ be defined by  
\[
\int_{\Omega_\varepsilon} ( \nabla u_\varepsilon | \nabla \varphi ) + \int_{S}|F_\varepsilon'| (u_\varepsilon \varphi)(F_\varepsilon) = \int_{S} |F_\varepsilon'| H_{F_\varepsilon} \varphi(F_\varepsilon), \qquad \varphi \in W_2^1(\Omega_\varepsilon).  
\]
Since $\Gamma_\varepsilon$ is smooth and elliptic regularity is a local property, $u_\varepsilon \in BUC^\infty (\Omega_\varepsilon,\RRM)$. Observe that 
\begin{equation}\label{apis}
\sup_{\varepsilon \in (0,\varepsilon_0)} \left( \Vert u_\varepsilon \Vert_{C(\overline{\Omega}_\varepsilon)} + \Vert u_\varepsilon \Vert_{W_2^1(\mathbb{B}^\pm)} + \Vert u_\varepsilon \Vert_{W_2^{1/2}(\partial \mathbb{B}^\pm)} \right)< \infty, 
\end{equation}
since 
\[
\int_{\Omega_\varepsilon} |\nabla u_\varepsilon|^2 + \frac{1}{2} \int_{S}|F_\varepsilon'| u_\varepsilon^2(F_\varepsilon) \leq \frac{1}{2} \int_{S} |F_\varepsilon'| H_{F_\varepsilon}^2 \leq \pi + \mathcal{O}(\varepsilon), 
\]
((\ref{apis}) also uses the standard trace result on $\mathbb{B}^\pm$) and in fact 
\begin{equation}\label{apis2}
- \varepsilon \leq u_\varepsilon \leq 1.
\end{equation}
(This can be deduced from (the proofs of) the elliptic maximum principle and the boundary point lemma.) 

\vspace{5mm}

We focus now on the solution $u_\varepsilon$ in $\overline{\mathbb{B}^+}$ (the analogue arguments hold for $\overline{\mathbb{B}^-}$) and write 
\[
u_\varepsilon^+ := u_\varepsilon |_{\overline{\mathbb{B}^+}} \qquad \qquad u_\varepsilon^+(x_0,0) := \lim_{h \downarrow 0} u_\varepsilon(x_0,h).
\]
Observe that, by smoothness, 
\[
\Delta u_\varepsilon^+ = 0 \qquad \mbox{ in $\overline{\mathbb{B}^+}$}, \qquad \qquad u_\varepsilon - \partial_{(\Gamma_\varepsilon)_{N_\varepsilon}} u_\varepsilon = 1 \qquad \mbox{ on $\mathcal{E}_\varepsilon^+$}.
\] 
($N_\varepsilon$ again is inner normal w.r.t. to $\mathcal{E}_\varepsilon^+$.) Our goal is to show that 
\[
u_\varepsilon^+(x_0,0) > 0.
\]
For simplicity, we drop the plus in our notation and write $u_\varepsilon$ instead of $u_\varepsilon^+$. Choose a cut-off function $\chi \in C^{\infty}(\overline{\mathbb{B}^+})$ that satisfies 

\vspace{5mm} 

\begin{minipage}{0.55\textwidth}
\begin{itemize}
\item $\chi = 1$ on a true subset $E^+ \subset \mathcal{E}^+$; 
\item $\chi \geq 0$ on $\overline{\mathbb{B}^+}$; $\chi > 0$ on $E^+ \cup \mathbb{B}^+$;
\item $\chi = 0$ on a true superset of $\partial \mathbb{B}^+ \setminus \mathcal{E}^+$; 
\item $0 \leq \chi \leq 1$, $\Delta \chi = 0$ in $\mathbb{B}^+$. 
\end{itemize}
\end{minipage}
\begin{minipage}{0.15\textwidth}
\[
\;
\] 
\end{minipage}
\begin{minipage}{0.3 \textwidth}
\includegraphics[width=3cm,height=4cm]{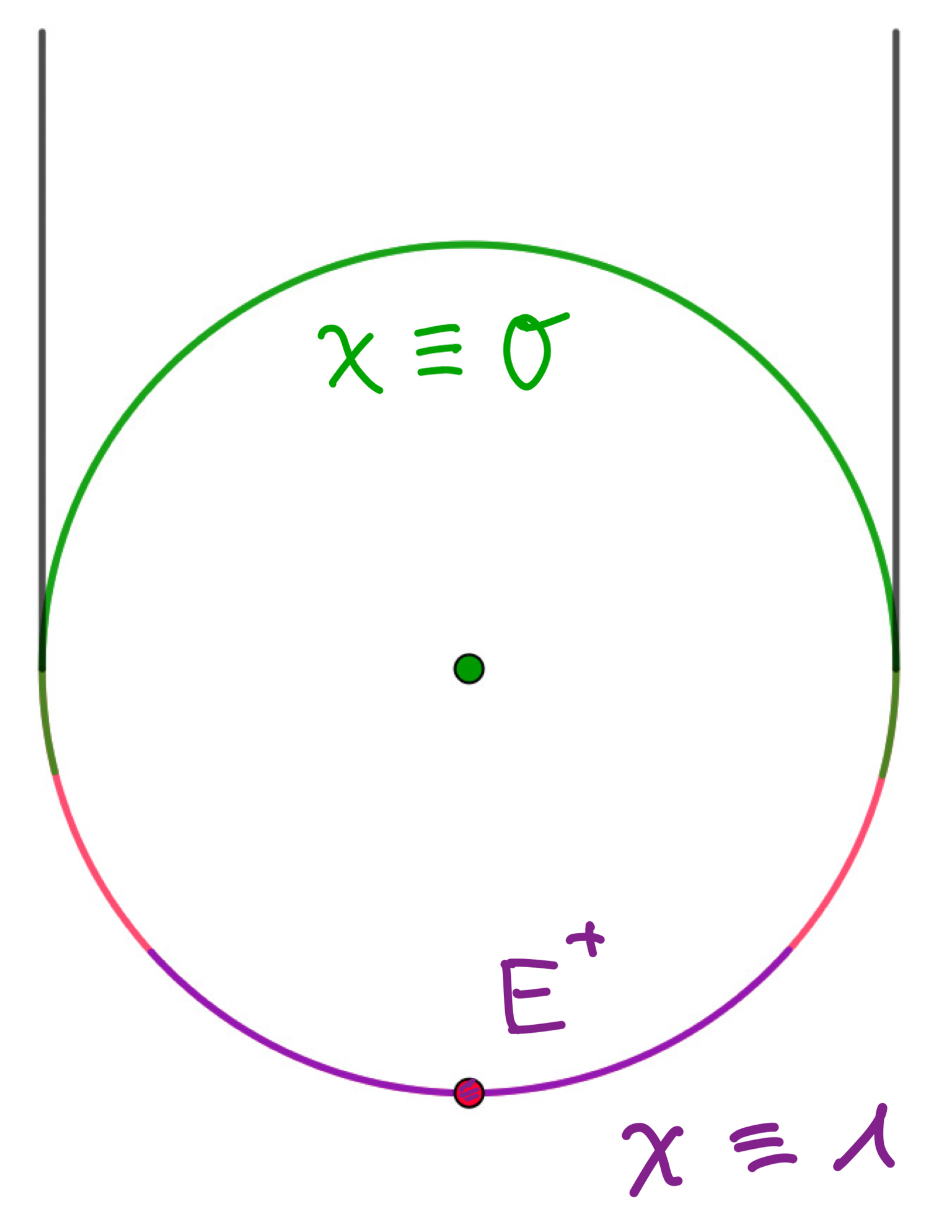}
\end{minipage}

\vspace{5mm} 

($\chi$ is obtained by solving a suitable Dirichlet Problem). Then for $z_\varepsilon := u_\varepsilon \chi$ we have 
\[
\Delta z_\varepsilon = 2 (\nabla u_\varepsilon | \nabla\chi) \qquad \mbox{ in } \mathbb{B}^+, \qquad \qquad z_\varepsilon - \partial_N z_\varepsilon = \chi - u_\varepsilon \partial_N \chi \qquad \mbox{ on } \partial \mathbb{B}^+. 
\]
By (\ref{apis}) and elliptic regularity, 
\[
\sup_{\varepsilon \in (0,\varepsilon_0)} \Vert z_\varepsilon \Vert_{W_2^2(\mathbb{B}^+)} < \infty
\]
and 
\[
u_\varepsilon \rightharpoonup u \quad \mbox{ in } W_2^1(\mathbb{B}^+), \qquad \qquad \qquad z_\varepsilon \rightharpoonup z \quad \mbox{ in } W_2^2(\mathbb{B}^+).
\]
Since $z \in W_2^2(\mathbb{B}^+) \hookrightarrow C(\overline{\mathbb{B}^+})$, $u_\varepsilon, z_\varepsilon \geq - \varepsilon$, $z \geq 0$: 

\vspace{5mm}

Indeed, suppose that $z < 0$ in $a \in \mathbb{B}^+$. Then $z \leq - \nu$ in some ball $B(a,2r) \subset \mathbb{B}^+$ ($\nu  > 0$). Pick $\varphi \in C^\infty(\overline{\mathbb{B}^+})$, $0 \leq \varphi \leq 1$, s.t. $\varphi = 1$ in $B(a,r)$ and $\varphi = 0$ in $\overline{\mathbb{B}^+} \setminus B(a,2r)$. Then  
\[
- 4 \varepsilon \pi r^2 \leq \int_{\mathbb{B}^+} z_\varepsilon \varphi \sim \int_{\mathbb{B}^+} z \varphi = \int_{B(a,2r)} z \varphi \leq - \nu \pi r^2 \qquad \qquad (\varepsilon << \nu / 4), 
\]
hence $\varepsilon \geq \nu/4$, a contradiction.

\vspace{5mm}

Since $u = z / \chi$ where $\chi > 0$, $u \in C(\{ \chi > 0\})$ and $z = u$ on $E^+$ (in the sense of a continuous representative). In particular, there exists a ball $B \subset \RRM^2$, $E^+ \subset B$ s.t. $u \in C \left( \overline{B \cap \mathbb{B}^+} \right) $, and $z \in W_2^2(\mathbb{B}^+)$ is the weak solution of 
\[
\Delta z = 2 (\nabla u | \nabla\chi) \qquad \mbox{ in } \mathbb{B}^+, \qquad \qquad z - \partial_N z = \chi - u \partial_N \chi \qquad \mbox{ on } \partial \mathbb{B}^+.
\]
Observe that (by zero extension), $f := 2 (\nabla u | \nabla\chi)$ can be considered an element of $L_2(\mathbb{R}^n)$. Let $f_\delta := f \ast \varphi_\delta$, $u_\delta := u \ast \varphi_\delta$ be the convolutions with the standard mollifier. Then 
\[
f_\delta \rightarrow f \quad \mbox{ in } L_2(\mathbb{B}^+) 
\]
and  
\[
u_\delta \rightarrow u \quad \mbox{ in } W_2^1(\mathbb{B}^+) \cap W_2^{1/2}(\partial \mathbb{B}^+) \cap C \left( \overline{B \cap \mathbb{B}^+} \right)
\]
(using also the Tietze extension theorem). Let $y_\delta \in C^\infty(\overline{\mathbb{B}^+})$ solve 
\[
\Delta y_\delta = f_\delta \qquad \mbox{ in } \mathbb{B}^+, \qquad \qquad y_\delta - \partial_N y_\delta = \chi - u_\delta \partial_N \chi \qquad \mbox{ on } \partial \mathbb{B}^+.
\]
Since the $y_\delta$ are uniformly bounded in $W_2^2(\mathbb{B}^+)$ and $y_\delta \rightarrow z$ in $W_2^1(\mathbb{B}^+)$, interpolation yields that 
\[
y_\delta \rightarrow z \quad \mbox{ in } W_2^{2-\nu}(\mathbb{B}^+) \hookrightarrow C(\overline{\mathbb{B}^+}) \qquad \qquad (0 < \nu < 1).
\]

\vspace{5mm}

Assume now that $u(x_0,0) = 0$. Then $z(x_0,0) = 0$ and thus $y_\delta(x_0) \leq 1/8$, hence $\partial_N y_\delta(x_0,0) \leq - 3/4$ (for small enough $\delta$ s.t. $|(u_\delta \partial_N \chi)(x_0,0)| \leq 1/8$). Since 
\begin{eqnarray*}
\partial_N y_\delta(x_0,0) & \sim & \frac{1}{h} \; \left( y_\delta((x_0,0) + h N) - y_\delta(x_0,0) \right) \\
& \sim & \frac{1}{h} \; \left( z((x_0,0) + h N) - z(x_0,0) \right) \\
& & (0 < \delta, h << 1),   
\end{eqnarray*} 
we have 
\[
z((x_0,0) + h N) < - \frac{h}{2}+ z(x_0,0) < 0, 
\]
which contradicts $z \geq 0$ in $\overline{\mathbb{B}^+}$. Hence, $u(x_0,0) > 0$ and therefore $u_\varepsilon > 0$, if $\varepsilon > 0$ is small enough:

\vspace{5mm}

Indeed, suppose that $u_\varepsilon(x_0,0) \leq 0$. Then $u \geq \nu$ and $u_\varepsilon \leq \nu/8$ in some set $E := B((x_0,0),2r) \cap \overline{\mathbb{B}^+}$ ($\nu  > 0$). Pick $\varphi \in C^\infty(\overline{\mathbb{B}^+})$, $0 \leq \varphi \leq 1$, s.t. $\varphi = 1$ in $B((x_0,0),r) \cap \overline{\mathbb{B}^+} \subset E$ and $\varphi = 0$ in $E^c = \overline{\mathbb{B}^+} \setminus B((x_0,0),2r)$. We have  
\[
\pi r^2 \nu / 2 \geq \int_{E} u_\varepsilon \varphi = \int_{\mathbb{B}^+} u_\varepsilon \varphi \sim \int_{\mathbb{B}^+} u \varphi = \int_{E} u \varphi \geq \nu \pi r^2, 
\]
a contradiction. At this point we fix a suitable $\varepsilon$ and drop it from our notation, i.e. write $u = u^+$ instead of $u_\varepsilon$ and store the fact that $u^+(x_0,0) > 0$. (Analogue arguments yield that also $u^-(x_0,0) > 0$, where $u^-(x_0,0)$ is defined in the obvious way.) By continuity, we can fix $\nu > 0$ s.t. $u^\pm > 0$ on $\mathbb{B}(x_0,\nu) \cap \overline{\Omega}^\pm$. ($\Omega^\pm$ $\Omega_\delta^\pm$, $\Gamma^\pm$, $\Gamma_\delta^\pm$ are defined in analogy to section 2.1)

\vspace{5mm}

Finally, we come back to the  enlarged domains $\Omega_\delta$ and aim to show that the corresponding solutions $u_\delta$ ( i.e. 
\[
\int_{\Omega_\delta} ( \nabla u_\delta | \nabla \varphi ) + \int_{S}|F_\delta'| (u_\delta \varphi)(F_\delta) = \int_{S} |F_\delta'| H_{F_\delta} \varphi(F_\delta), \qquad \varphi \in W_2^1(\Omega_\delta) \quad \mbox{ ) }  
\]
will still be positive at parts of the flat segments (if $\delta > 0$ is small enough). Observe that $\Vert F_{\delta}\Vert_{C^k(S)} \leq M_k$ for $k \in \mathbb{N}$. Localization, the classcical halfspace result for Robin type boundary conditions coupled with elliptic operators and (\ref{apfdelta}) yield $u_\delta \in BUC^\infty(\Omega)$ and 
\[
\sup_{0 < \delta < \delta_0} \Vert u_\delta \Vert_{BUC^{k}(\Omega)} \leq C_k \qquad \qquad (k \in \mathbb{N}) 
\]
as well as 
\[
\Delta u_\delta = 0 \quad \mbox{ in } \Omega_\delta \supset \Omega, \qquad \qquad (u_\delta - (\nabla u_\delta | N_{F_\delta})) \circ F_\delta = H_\delta \quad \mbox{ on } S.
\]
By compact embedding, along some subnet $\tilde \delta$, again denoted by $\delta$,  
\[
u_\delta |_{\Omega^\pm} \rightarrow w^\pm \qquad \mbox{ in } C^2 \left( \overline{\Omega^\pm} \right).
\]
(We need to be a bit careful here, since the compact embedding argument uses Arzel\`{a} - Ascoli and hence evaluation in the double point.) However, since $w^+ = w^-$ in $\overline{\Omega^+} \cap \overline{\Omega^-} \cap \{ (x,y); \; x < 0 \}$ by uniqueness of the limits, and $w^+$, $w^-$ naturally give rise to a function $w \in BUC^2(\Omega)$ that satisfies 
\[
u_\delta \rightarrow w \quad \mbox{ in } BUC^2(\Omega), \qquad \qquad \Delta w = 0 \quad \mbox{ in } \Omega.
\]
Moreover, 
\begin{eqnarray*}
|u_\delta(F_\delta(s)) - w(F(s))| & \leq & |u_\delta(F_\delta(s)) - u_\delta(F(s))| + |u_\delta(F(s)) - w(F(s))| \\
& \leq & C_1 |F_\delta(s)) - F(s)| + |u_\delta(F(s)) - w(F(s))| \\
& & \rightarrow 0 \qquad \qquad (s \in S).
\end{eqnarray*}
The analogue argument (invoking (\ref{conFd})) implies $ (\nabla u_\delta | N_{F_\delta}) \circ F_\delta \rightarrow (\nabla w | N_{F}) \circ F $, and we find
\[
(w - (\nabla w | N_{F})) \circ F = H \quad \mbox{ a.e. on } S, 
\]
which implies $w = u$. Hencey, $u_\delta^\pm > 0$ on, say,  $\mathbb{B}(x_0,\nu/2) \cap \overline{\Omega^\pm}$.

\vspace{5mm}

\section{Appendix}

\vspace{5mm}

\begin{lemma}
For $u,v \in W_2^1(\Omega_0)$ we have 
\[
\int_{\Omega_0} u \partial_j v + \int_{\Omega_0} \partial_j u v = - \int_{S} (u v)(F_0) n_{0,j} |F_0'|.
\]
\end{lemma}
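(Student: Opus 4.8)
The plan is to establish the integration-by-parts (Gauss--Green / divergence) formula on the Lipschitz-type domain $\Omega_0$ whose boundary contains the touching segment $K_0$. The natural strategy is a density-plus-approximation argument: the identity is standard when $u,v \in C^\infty(\overline{\Omega_0})$ and $\partial\Omega_0$ is smooth, so I would first reduce the general case to the smooth one by approximating $u,v$ in $W_2^1(\Omega_0)$ and approximating the domain by slightly shrunken smooth subdomains. Concretely, both sides of the asserted formula are continuous with respect to $(u,v) \in W_2^1(\Omega_0) \times W_2^1(\Omega_0)$: the bulk integrals are controlled by the product of $W_2^1$-norms via Cauchy--Schwarz, and the boundary integral is controlled using the trace map $\mbox{tr} \in \cll(W_2^1(\Omega_0),W_2^{1/2}(S))$ constructed in section 2.2, together with $W_2^{1/2}(S) \hookrightarrow L_2(S)$ and $\sup_S |F_0'| < \infty$. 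Hence it suffices to prove the identity for $u,v$ in a dense subset.

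Second, I would exploit the specific structure of $\Omega_0$. Away from the touching segment $K_0$ (and the vertical-line part of $\gamma_0^+\cap\gamma_0^-$ over $\{x\le 0\}$), $\partial\Omega_0$ is smooth, so the only delicate point is the behaviour near $K_0$. Here I would use the splitting $\Omega_0 = \Omega_0^+ \cup \Omega_0^-$ introduced in section 2.1, where $\Omega_0^\pm$ is (essentially) the part of $\Omega_0$ lying above/below the $x$-axis. Each $\Omega_0^\pm$ is a Lipschitz domain with boundary a subarc of $\gamma_0^\pm$ (which is embedded) together with (part of) the segment $K_0$ on the $x$-axis; on such Lipschitz domains the Gauss--Green formula $\int_{\Omega_0^\pm} u\,\partial_j v + \int_{\Omega_0^\pm}\partial_j u\, v = \int_{\partial\Omega_0^\pm}(uv)\nu_j$ is classical (it holds on any bounded Lipschitz domain for $W_2^1$ functions, see e.g.\ the standard references, or derive it by flattening the boundary locally and using the half-space case). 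Adding the $+$ and $-$ identities, the contributions along $K_0$ cancel: on the upper side the outer normal to $\Omega_0^+$ is $-e_2$, on the lower side the outer normal to $\Omega_0^-$ is $+e_2$, so the $j=2$ boundary terms over $K_0$ are $\mp\int_{K_0}(uv)$ and cancel, while for $j=1$ the normal component along $K_0$ vanishes; the remaining boundary integral is exactly $\int_{\gamma_0}(uv)\nu_j = \int_S (uv)(F_0)\,(N_{\gamma_0})_j\circ F_0\,|F_0'|\,ds$, and since $n_{0,j}$ denotes the $j$-th component of $N_0 = N_{\gamma_0}\circ F_0$, this is the right-hand side. (One must check that the traces of $u,v$ from $\Omega_0^+$ and from $\Omega_0^-$ onto the common segment $K_0$ agree; this is built into the definition of the global trace map in section 2.2, where the partitions of unity are chosen so that $\mbox{tr}^+u$ and $\mbox{tr}^-u$ coincide on the overlap, and one uses density of smooth functions to transfer the cancellation.)

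A slightly cleaner alternative, which I would also keep in mind, is to avoid the domain splitting: reflect across the $x$-axis. Since $\gamma_0$ is symmetric under $(p_1,p_2)\mapsto(p_1,-p_2)$, the doubled object behaves well, but the cleanest route is still to note that any point of $K_0$ has a one-sided neighbourhood in $\Omega_0^\pm$ that is bi-Lipschitz to a half-disc, so the local half-space integration-by-parts applies; a partition of unity subordinate to a cover of $\overline{\Omega_0}$ by such Lipschitz charts (plus interior balls) then assembles the global identity, with all interior-interface terms cancelling because each interior chart boundary piece is shared by two charts with opposite normals.

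The main obstacle I expect is purely the boundary regularity bookkeeping at the touching set: making sure that (i) the one-sided traces onto $K_0$ genuinely coincide as elements of $L_2(K_0)$ so that the interface terms cancel rather than doubling, and (ii) the decomposition $\Omega_0 = \Omega_0^+\cup\Omega_0^-$ produces honest bounded Lipschitz domains (no cusps) to which the textbook Gauss--Green theorem applies — this uses the hypotheses on $F_0$ from section 2.1, in particular that $\gamma_0^\pm$ is embedded, $x_0'>0$ on $\mathcal{O}_0^+$, $x_0'<0$ on $\mathcal{O}_0^-$, and that $F_0[S^+]$, $F_0[S^-]$ lie in the closed upper/lower half-planes so the segment $K_0$ really is a flat piece of both $\partial\Omega_0^+$ and $\partial\Omega_0^-$. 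Once those two points are pinned down, everything else is the routine density argument sketched above.
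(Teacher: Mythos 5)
Your overall route --- cut $\Omega_0$ along a horizontal line, apply Gauss--Green on each Lipschitz piece, and add --- is the paper's route as well (the paper cuts along a horizontal chord $\lambda$ supplied by Theorem 2.1.4 of \cite{Zie}; your density reduction to smooth $u,v$ is a legitimate substitute for the absolute-continuity-along-$\lambda$ point). The gap is in your treatment of the touching segment $K_0$. You claim the two $K_0$ boundary terms from $\partial\Omega_0^+$ and $\partial\Omega_0^-$ cancel because the one-sided traces agree, citing section 2.2. That is a misreading of section 2.2: it only asserts that $\mbox{tr}^+ u$ and $\mbox{tr}^- u$ coincide on $X^+ \cap X^-$, the two small arcs of $S$ near $(1,0)$ and $(-1,0)$, and these are disjoint from $\mathcal{A}_0^\pm \subset S^\pm$, the two arcs of $S$ that $F_0$ maps onto $K_0$. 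The whole point of taking $\mbox{tr}\,u \in W_2^{1/2}(S)$ rather than a function on the point set $\gamma_0$ is precisely to record two distinct one-sided traces along the double-covered segment $K_0$: a $W_2^1(\Omega_0)$ function can take, and generically does take, different boundary values on the two sides of the pinch. Consequently the $K_0$ contributions do not cancel; with their opposite inner normals and their distinct one-sided traces they are exactly the $\int_{\mathcal{A}_0^+}$ and $\int_{\mathcal{A}_0^-}$ parts of $-\int_S (uv)(F_0)\,n_{0,j}\,|F_0'|$. Your write-up still arrives at the correct identity only by a compensating slip: after the purported cancellation of the $K_0$ terms you equate what should be the boundary integral over $\gamma_0\setminus K_0$ (i.e.\ over $S \setminus (\mathcal{A}_0^+\cup\mathcal{A}_0^-)$) with the full integral $\int_S (uv)(F_0)\,n_{0,j}\,|F_0'|$. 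The fix is immediate: keep both one-sided $K_0$ terms and observe that, together with the remaining boundary arcs, they reparameterize precisely to $-\int_S (uv)(F_0)\,n_{0,j}\,|F_0'|$; the terms that genuinely cancel in this argument are those over the interior cut $\lambda$, where the traces from the two sides really do agree.
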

\begin{proof}
Let $u,v \in W_2^1(\Omega_0)$. 

\begin{minipage}{0.5\textwidth}
\begin{flushleft}
By Theorem 2.1.4 in \cite{Zie} there is a horizontal line $\lambda \subset \Omega_0^+ \cap \Omega_0^-$ that splits $\Omega_0$ into two Lipschitz domains $D_0^+$ and $D_0^-$, and $u,v$ are absolutely continuous along this line. W.l.o.g., $\partial D_0^\pm \setminus \lambda \subset \gamma_0^\pm = F_0[X^\pm]$.
We denote by $N_{\gamma_0}^\pm = (n_{{\gamma_0},1}^\pm, n_{{\gamma_0},2}^\pm)^T$ the respective inner unit normal fields on $\partial D_0^\pm$ (set as usual $N_{0}^\pm := N_{\gamma_0}^\pm \circ F_0 |_{X^\pm}$) and calculate 
\end{flushleft}
\end{minipage}
\begin{minipage}{0.1\textwidth}
$\;$
\end{minipage}
\begin{minipage}{0.4\textwidth}
\begin{flushleft}
\includegraphics[width=3.6cm,height=4.5cm]{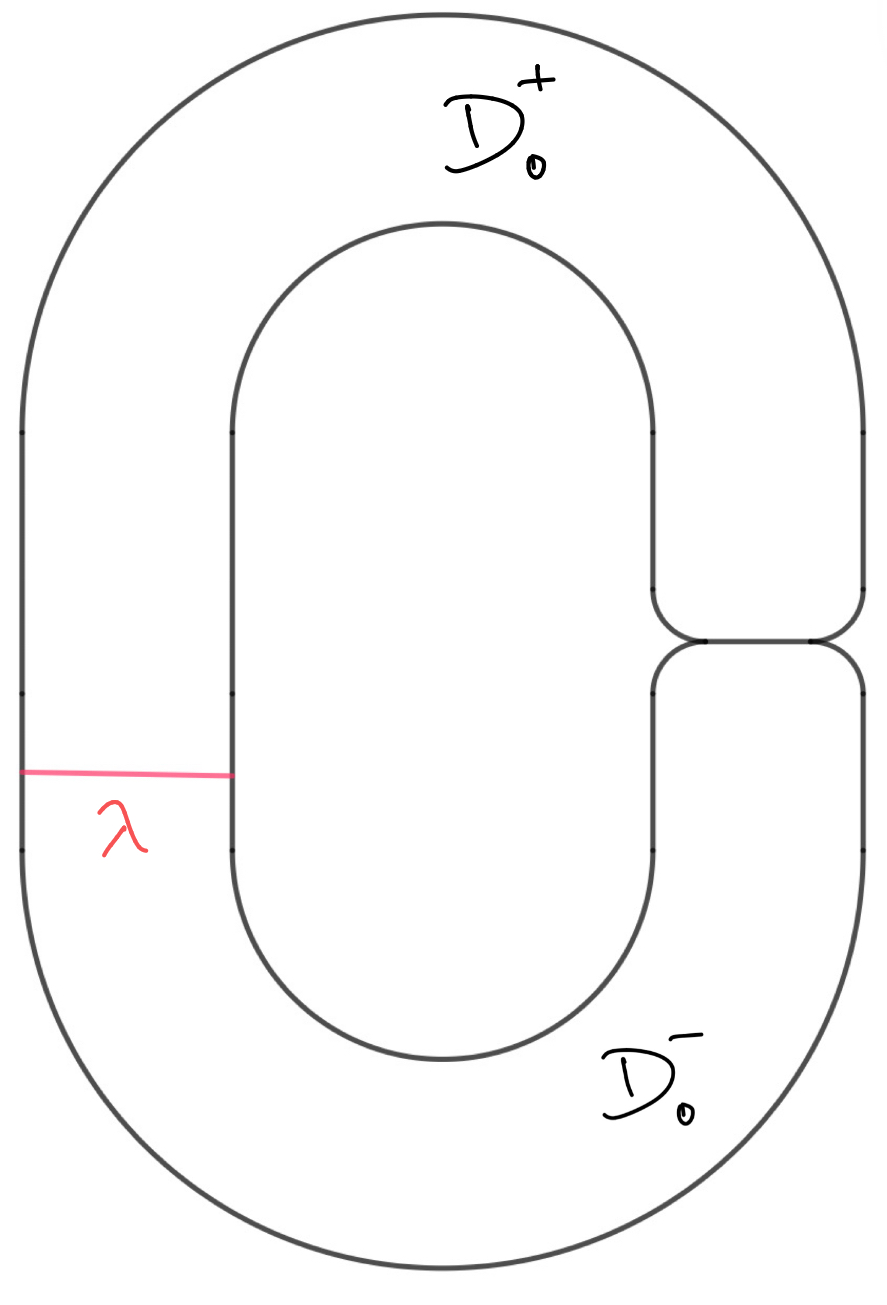}
\end{flushleft}
\end{minipage}

\vspace{5mm}

\begin{eqnarray*}
\int_{\Omega_0} u \partial_j v + \int_{\Omega_0} \partial_j u v & = & \int_{D_0^+} u \partial_j v + \partial_j u v + \int_{D_0^-} u \partial_j v + \partial_j u v \\
& = & - \int_{\partial D_0^+ \setminus \lambda} u v n_{\gamma_0,j}^+ - \int_{\partial D_0^- \setminus \lambda} u v n_{\gamma_0,j}^- \\
&    & - \int_\lambda u v n_{\gamma_0,j}^+ - \int_\lambda u v n_{\gamma_0,j}^- \\
& = & - \int_{S} (u v)(F_0) n_{0,j} |F_0'| \\
&    & + \quad (j-1) \cdot \left\{ - \int_\lambda u v + \int_\lambda u v \right\} \\
& = &  - \int_{S} (u v)(F_0) n_{0,j} |F_0'|, 
\end{eqnarray*}
$j \in \{ 1, 2\}$.
\end{proof}

\vspace{5mm}

\begin{lemma}
The forms $\tilde a_F$ in (\ref{eSpB}) are uniformly coercive w.r.t. $F \in \mathbf{B}_\varepsilon$.
\end{lemma}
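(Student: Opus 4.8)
The plan is to bound $\tilde a_F(v,v)$ from below by handling its Dirichlet part and its boundary part separately, each uniformly in $F\in\mathbf{B}_\varepsilon$, and then to close the argument with a single $F$-independent trace--Poincar\'{e} inequality on the fixed reference domain $\Omega_0$. For $v\in W_2^1(\Omega_0)$ one has
\[
\tilde a_F(v,v)=\int_{\Omega_0}|\det D\tilde\Theta_F|\,\bigl|(D\tilde\Theta_F^{-1})^T(\tilde\Theta_F)\,\nabla v\bigr|^2\,dx+\int_S|D(\tilde\theta_F\circ F_0)|\,(\mathrm{tr}\, v)^2\,ds ,
\]
so the first step is to record pointwise (a.e.) lower bounds on the coefficients. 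By (\ref{D3}) and (\ref{D4}) (shrinking $\varepsilon$ if necessary), the matrix field $M_F:=(D\tilde\Theta_F^{-1})^T\circ\tilde\Theta_F$ obeys $\Vert M_F-\mathbf{Id}\Vert_{L_\infty(\Omega_0)}\le\kappa$ for a fixed $\kappa<1$, so that $|M_F\xi|\ge(1-\kappa)|\xi|$ a.e.\ in $\Omega_0$ for all $\xi\in\RRM^2$, and $|\det D\tilde\Theta_F|\ge 1-\kappa$; by (\ref{stheta}), $|D(\tilde\theta_F\circ F_0)|\ge|F_0'|-\kappa\ge(1-\kappa)\min_S|F_0'|>0$. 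These give a constant $c_0>0$ depending only on $F_0$ with
\[
\tilde a_F(v,v)\ \ge\ c_0\bigl(\Vert\nabla v\Vert_{L_2(\Omega_0)}^2+\Vert\mathrm{tr}\, v\Vert_{L_2(S)}^2\bigr),\qquad v\in W_2^1(\Omega_0),\ F\in\mathbf{B}_\varepsilon .
\]

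The second step is the $F$-independent inequality $\Vert\nabla v\Vert_{L_2(\Omega_0)}^2+\Vert\mathrm{tr}\, v\Vert_{L_2(S)}^2\ge c_1\Vert v\Vert_{W_2^1(\Omega_0)}^2$ for all $v\in W_2^1(\Omega_0)$; combined with the previous display this yields $\tilde a_F(v,v)\ge c_0 c_1\Vert v\Vert_{W_2^1(\Omega_0)}^2$ and proves the lemma. I would establish it by contradiction. If $v_n\in W_2^1(\Omega_0)$ had $\Vert v_n\Vert_{W_2^1(\Omega_0)}=1$ but $\Vert\nabla v_n\Vert_{L_2(\Omega_0)}+\Vert\mathrm{tr}\, v_n\Vert_{L_2(S)}\to0$, then, using the horizontal line $\lambda$ that — as in the proof of the preceding lemma of this appendix — splits $\Omega_0$ into two bounded Lipschitz domains $D_0^\pm$, and applying Rellich--Kondrachov on each piece (the $L_2$- and $W_2^1$-norms being additive over $\Omega_0=D_0^+\cup D_0^-\cup\lambda$ with $\lambda$ a null set), a subsequence converges in $L_2(\Omega_0)$ to some $v$; together with $\nabla v_n\to0$ this upgrades to $v_n\to v$ in $W_2^1(\Omega_0)$, whence $\nabla v\equiv0$ and, $\Omega_0$ being connected, $v\equiv c$ with $c\neq0$ because $\Vert v\Vert_{W_2^1(\Omega_0)}=1$. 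But the trace operator of section 2.2 is bounded $W_2^1(\Omega_0)\to L_2(S)$ and sends the constant $c$ to the constant $c$, so $\mathrm{tr}\, v_n\to c$ in $L_2(S)$; since also $\mathrm{tr}\, v_n\to0$ in $L_2(S)$ and $S$ has positive length, $c=0$ --- a contradiction.

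The main obstacle is purely that $\Omega_0$ is not a Lipschitz domain across the doubled segment $K_0$, so both the compact embedding $W_2^1(\Omega_0)\hookrightarrow\hookrightarrow L_2(\Omega_0)$ and the implication that $\nabla v\equiv0$ forces $v$ to be constant, used in the trace--Poincar\'{e} step, must be routed through the decomposition of $\Omega_0$ into genuine Lipschitz pieces, exactly as in the preceding appendix lemma. Once that is granted, uniformity in $F$ costs nothing: it is carried entirely by the bounds (\ref{D1})--(\ref{D4}) and (\ref{stheta}) already in hand, and only the lower bounds on $|\det D\tilde\Theta_F|$, on $M_F$, and on $|D(\tilde\theta_F\circ F_0)|$ actually enter the estimate.
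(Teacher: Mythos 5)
Your argument is correct and follows the paper's strategy: you first use the perturbation estimates (\ref{stheta}), (\ref{D1})--(\ref{D4}) to reduce uniform coercivity of $\tilde a_F$ to coercivity of the fixed reference form $a(u,v)=\int_{\Omega_0}(\nabla u\,|\,\nabla v)+\int_S|F_0'|(uv)(F_0)$, and then establish the latter by contradiction, splitting $\Omega_0$ across the doubled segment into Lipschitz pieces so that Rellich--Kondrachov applies on each. The one step where you deviate is how the $L_2$-limit $v$ is shown to lie in $W_2^1(\Omega_0)$ with $\nabla v=0$: the paper passes to the limit in the integration-by-parts identity of the preceding appendix lemma, whereas you observe directly that $v_n\to v$ in $L_2(\Omega_0)$ together with $\nabla v_n\to0$ in $L_2(\Omega_0)$ makes $(v_n)$ Cauchy in the complete space $W_2^1(\Omega_0)$, so that $v\in W_2^1(\Omega_0)$ and $\nabla v=0$; this is a small but genuine simplification that sidesteps the integration-by-parts lemma entirely.
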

\begin{proof}
Because of the decomposition
\begin{eqnarray*}
\tilde a_F(u,u) & = & \int_{\Omega_0} |1 - (1 - \mbox{det} D \tilde \Theta_F)| (\nabla u | \nabla u)_{\tilde F} + \int_S |D (\tilde \theta_F \circ F_0)| u^2 \\
& \geq &  \int_{\Omega_0} (\nabla u | \nabla u)_{\tilde F} - \int_{\Omega_0} |1 - \mbox{det} D \tilde \Theta_F| (\nabla u | \nabla u)_{\tilde F} + \int_S |D ( \tilde \theta_F \circ F_0)| u^2, \\
\end{eqnarray*}
\begin{eqnarray*}
\left( \nabla u | \nabla u \right)_{\tilde F} & = & \left( \; \left( \left( D \tilde \Theta_F^{-1}\right)^T (\tilde \Theta_F) - \mathbf{Id} \right) \nabla u \; | \;  \left( D \tilde \Theta_F^{-1} \right)^T (\tilde \Theta_F) \nabla u \; \right) \\
& + & \left( \; \nabla u \; | \;  \left( \left( D \tilde \Theta_F^{-1}\right)^T (\tilde \Theta_F) - \mathbf{Id} \right) \nabla u \; \right) \\
& + & \left( \; \nabla u \; | \;  \nabla u \; \right) \\
\end{eqnarray*}
and 
\begin{eqnarray*}
\int_S |D (\tilde \theta_F \circ F_0)| u^2 & = & \int_S |F_0' - ( F_0' - D ( \tilde \theta_F \circ F_0) )| u^2 \\ \\
& \geq &  \int_S |F_0'| u^2 - \int_S |F_0' - D ( \tilde \theta_F \circ F_0)| u^2, 
\end{eqnarray*}
in view of (\ref{stheta}), (\ref{D1}) - (\ref{D4}), it suffices to show that 
\[
a(u,v) := \int_{\Omega_0} (\nabla u | \nabla v) + \int_S |F_0'| (u v)(F_0)
\]
is coercive. (Observe that w.l.o.g. $|F_0'| > 0$ on S.) We can almost follow the standard argumentation: If not, there would be a sequence $(u_k) \subset W_2^1(\Omega_0)$, $\Vert u_k \Vert_{W_2^1(\Omega_0)} = 1$ s.t. 
\begin{itemize}
\item[(i)] $\int_{\Omega_0} |\nabla u_k|^2 \rightarrow 0$;
\item[(ii)] $\int_S |F_0'| u_k^2(F_0) \rightarrow 0$.
\end{itemize}
Let $D_0^+ := \Omega_0 \cap \{ (x,y); \; y > 0\}$, $D_0^- := \Omega_0 \cap \{ (x,y); \; y < 0\}$. Since the boundaries $\partial D_0^\pm$ are Lipschitz (and hence have the extension property for $W_p^1$ - functions), after passing to a subsequence (twice), the Rellich-Kondrachov Theorem yields the existence of $u^\pm \in L_2(D_0^\pm)$ s.t. 
\begin{itemize}
\item $\int_{D_0^\pm} |u_k - u^\pm|^2 \rightarrow 0$.
\end{itemize}
$u^+$ and $u^-$ naturally form an element $u \in L_2(\Omega_0)$ s.t. 
\begin{itemize}
\item[(iii)] $\int_{\Omega_0} |u_k - u|^2 = \int_{D^+_0} |u_k - u^+|^2 + \int_{D^-_0} |u_k - u^-|^2 \rightarrow 0$.
\end{itemize}
From the previous Lemma, (i), (ii) and (iii) we have 
\[
\int_{\Omega_0} u \partial_j v \leftarrow \int_{\Omega_0} u_k \partial_j v = - \int_{\Omega_0} \partial_j u_k v - \int_{S} (u_k v)(F_0) n_{0,j} |F_0'| \rightarrow 0, \qquad v \in W_2^1(\Omega_0), 
\]
and hence $u \in W_2^1(\Omega_0)$ with $\nabla u = 0$ a.e., which implies that $u$ is constant a.e. (and $u_k \rightarrow u$ in $W_2^1(\Omega_0)$). Since the trace mapping from section 2.1 is continuous, by (ii), $0 \leftarrow \int_S |F_0'| u_k^2(F_0) \rightarrow \int_S |F_0'| u^2(F_0)$, which implies $u \equiv 0$ a.e., contradicting $\Vert u_k \Vert_{W_2^1(\Omega_0)} = 1$ for all $k$.
\end{proof}
\begin{lemma}\label{contws}
Let $H$ be a Hilbert space over $\KKM$ and let $\mathcal{S}^2(H)$ be the Banach space of bounded sesquilinear forms $a: H \times H \rightarrow \KKM$ with norm
\[
\Vert a \Vert_{\mathcal{S}^2(H)} := \sup \{ |a(u,v)|; \; \Vert (u,v) \Vert_{H \times H} \leq 1 \}.
\]
Let $C, c > 0$ and $a \in Y_{C,c} \subset \mathcal{S}^2(H)$ iff 
\begin{itemize}
\item $\forall \; (u,v) \in H \times H$: $|a(u,v)| \leq C \Vert u \Vert_H \Vert v \Vert_H$;
\item $\forall \; u \in H$: $\Re a(u,u) \geq c \Vert u \Vert^2$.
\end{itemize}
For $(a,b) \in Y_{C,c} \times H'$ denote by $w(a,b)$ the unique solution of the equation 
\[
a(v,w) = b(v), \qquad \qquad v \in H 
\]
(Lax-Milgram, Riesz). The mapping 
\[
(a,b) \mapsto w(a,b): Y_{C,c} \times H' \rightarrow H
\]
is locally Lipschitz continuous.
\end{lemma}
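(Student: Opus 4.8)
The plan is to establish the two pieces — Lipschitz dependence in the $b$-slot with $a$ fixed, and Lipschitz dependence in the $a$-slot with $b$ fixed — and then combine them with the triangle inequality, using the uniform lower bound $c$ on coercivity to control the Riesz/Lax--Milgram inverses uniformly on $Y_{C,c}$. First I would record the a priori bound: for $a \in Y_{C,c}$ and any $b \in H'$, testing the defining equation $a(v,w) = b(v)$ with $v = w$ and taking real parts gives $c \norm{w}^2 \leq \Re a(w,w) = \Re b(w) \leq \norm{b}_{H'} \norm{w}$, hence $\norm{w(a,b)} \leq \norm{b}_{H'}/c$. This single estimate is the engine for everything that follows, and it is uniform over the whole set $Y_{C,c}$, not merely locally; so in fact the mapping will turn out to be globally Lipschitz on $Y_{C,c} \times H'$ in $b$ and locally Lipschitz in $a$ (the local restriction entering only through the size of $\norm{b}_{H'}$).

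Next I would treat the $b$-variable. Fix $a \in Y_{C,c}$ and $b_1, b_2 \in H'$; by linearity of the equation in its right-hand side, $w(a,b_1) - w(a,b_2) = w(a, b_1 - b_2)$, so the a priori bound immediately gives $\norm{w(a,b_1) - w(a,b_2)} \leq \norm{b_1 - b_2}_{H'}/c$. Then the $a$-variable: fix $b \in H'$ and $a_1, a_2 \in Y_{C,c}$, write $w_i := w(a_i,b)$, and subtract the two defining identities. For all $v \in H$ one has $a_1(v,w_1) - a_2(v,w_2) = 0$, which I rearrange as $a_2(v, w_1 - w_2) = a_2(v,w_1) - a_1(v,w_1) = (a_2 - a_1)(v,w_1)$. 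Testing with $v = w_1 - w_2$ and taking real parts, $c\norm{w_1 - w_2}^2 \leq \Re\big((a_2 - a_1)(w_1-w_2, w_1)\big) \leq \norm{a_1 - a_2}_{\mathcal{S}^2(H)}\,\norm{w_1 - w_2}\,\norm{w_1}$. Dividing by $\norm{w_1 - w_2}$ and inserting $\norm{w_1} \leq \norm{b}_{H'}/c$ yields $\norm{w_1 - w_2} \leq c^{-2}\norm{b}_{H'}\,\norm{a_1 - a_2}_{\mathcal{S}^2(H)}$.

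Finally I would combine: for $(a_1,b_1), (a_2,b_2)$ in a neighborhood of a fixed point, say with $\norm{b_i}_{H'} \leq \beta$, the triangle inequality through the intermediate term $w(a_1,b_2)$ gives
\[
\norm{w(a_1,b_1) - w(a_2,b_2)} \leq \norm{w(a_1,b_1-b_2)} + \norm{w(a_1,b_2) - w(a_2,b_2)} \leq \tfrac{1}{c}\norm{b_1-b_2}_{H'} + \tfrac{\beta}{c^2}\norm{a_1-a_2}_{\mathcal{S}^2(H)},
\]
which is the asserted local Lipschitz continuity with constant $\max\{1/c, \beta/c^2\}$ (up to a factor $2$). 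There is no genuine obstacle here; the only point requiring a little care is bookkeeping the convention on the slot in which the sesquilinear form is antilinear (the statement writes $a(v,w)=b(v)$, so $w$ sits in the second slot), making sure that in the subtraction step one factors out $(a_2-a_1)(\cdot,w_1)$ with $w_1$ — the \emph{already solved} function — in that same second slot, so that coercivity can still be applied to the difference $w_1-w_2$. Everything else is the two a priori estimates above plus a triangle inequality.
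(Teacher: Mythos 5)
Your proof is correct, but it takes a genuinely different route from the paper. The paper rewrites the problem operator-theoretically: it introduces $L(a,\cdot)\colon H\to H'$ by $L(a,v)=a(\cdot,v)$, observes that $w(a,b)=\bigl(J^{-1}L(a,\cdot)\bigr)^{-1}J^{-1}b$ with $J$ the Riesz isometry, and then invokes two soft facts --- the map $a\mapsto L(a,\cdot)$ is bounded linear (hence Lipschitz) into $\mathcal{L}(H,H')$ with values staying in the invertibles thanks to the uniform coercivity bound $c$, and inversion $T\mapsto T^{-1}$ is a smooth (indeed analytic) operation on $\mathcal{L}_{is}(H)$. That argument actually yields more than was claimed: $(a,b)\mapsto w(a,b)$ comes out real-analytic on $Y_{C,c}\times H'$, not merely locally Lipschitz. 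Your proof instead runs directly through the coercivity estimate $c\Vert w\Vert^2\le\Re a(w,w)=\Re b(w)$, obtains the a priori bound $\Vert w(a,b)\Vert\le\Vert b\Vert_{H'}/c$ uniformly over $Y_{C,c}$, and then derives the two one-variable Lipschitz estimates by linearity in $b$ and by subtracting the two defining identities and testing with $w_1-w_2$ in the $a$-variable. This is more elementary and self-contained --- it avoids operator-calculus abstractions entirely --- and it has the advantage of producing explicit constants ($1/c$ in the $b$-slot, $\Vert b\Vert_{H'}/c^2$ in the $a$-slot), making the source of the ``local'' qualifier transparent: the $b$-dependence is globally Lipschitz, and locality enters only through a bound on $\Vert b\Vert_{H'}$. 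The one point that needs care, which you flagged and handled consistently, is the slot convention: since the statement places $w$ in the second slot, the subtraction step must produce $(a_2-a_1)(\cdot,w_1)$ with the already-solved $w_1$ there, leaving the difference $w_1-w_2$ free in the second slot so coercivity of $a_2$ can be applied. Both proofs are sound; yours is the hands-on, quantitatively explicit version of the same fact.
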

\begin{proof}
Let $L: Y_{C,c} \times H \rightarrow H'$ be defined by $L(a,v) := a(\cdot,v)$, and let $J: H \rightarrow H'$ be the Riesz isometry. Then 
\[
w(a,b) = (J^{-1} L(a,\cdot))^{-1}J^{-1}(b).
\]
Since inversion a smooth operation on $\mathcal{L}_{is}(H)$, the assertion follows from the fact that $L$ is Lipschitz continuous on bounded sets.  
\end{proof}


\begin{thebibliography}{99}
\bibitem{LaQPP}{\sc Amann, H.:} {\em Linear and Quasilinear Parabolic Problems.} Birkh\"auser, Basel, 1995.
\bibitem{Kn}{\sc Kneisel, Carlheinz:} {\em \"Uber das Stefan-Problem mit Oberfl\"achenspannung und thermischer Unterk\"uhlung.}  Gottfried Wilhelm Leibniz Universit\"at Hannover, Diss., 2007, 92 S, https://doi.org/10.15488/6909.
\bibitem{L}{\sc Lippoth, F.:} {\em On the blow-up mechanism of moving boundary problems.} Interfaces and Free Boundaries, {\bf 13} (2011), 423--435.
\bibitem{MS}{\sc Mayer, U.F., Simonett, G.:} {\em Self-intersections for the surface diffusion and the volume-preserving mean curvature flow.} Differential Integral Equations, {\bf 13} (2000), 1189--1199.
\bibitem{PSZ}{\sc Pr\"uss, J., Simonett, G., Zacher, R.:} {\em Qualitative behavior of solutions for thermodynamically consistent Stefan problems with surface tension.} Arch. Ration. Mech. Anal. {\bf 207} (2013), 611--667.
\bibitem{SF}{\sc Solonnikov, V.A., Frolova, E.V.:} {\em $L_p$-Theory for the Stefan problem.} Journ. Math. Sciences, {\bf 99} (2000), 989--1006.
\bibitem{S1}{\sc Stefan, J.:} {\em Ueber die Theorie der Eisbildung, insbesondere ueber die Eisbildung im Polarmeere.} Sitzungsber. der kais. Acad. d. Wiss. in Wien math.-naturw. Cl. {\bf 98} (1889), 269--286.
\bibitem{S2}{\sc Stefan, J.:} {\em Ueber die Theorie der Eisbildung.} Monatshefte f\"ur Mathematik, {\bf 1} (1890), 1--6.
\bibitem{Zie}{\sc Ziemer, William P.:} {\em Weakly differentiable functions. Sobolev spaces and functions of bounded variation.} Graduate Texts in Mathematics, Vol. 120, New York: Springer-Verlag (1989). doi:10.1007/978-1-4612-1015-3.
\end{thebibliography}
\end{document}